\documentclass[a4paper,oneside,10pt]{amsart}

\usepackage{a4wide}
\usepackage[T1]{fontenc}
\usepackage[ansinew]{inputenc}
\usepackage{lmodern} 
\usepackage{graphicx}
\usepackage{amsmath}
\allowdisplaybreaks
\usepackage{amsthm}
\usepackage{amsfonts}
\usepackage{amssymb}
\usepackage{setspace}
\usepackage{mathrsfs}
\usepackage[all]{xy}
\usepackage{enumerate}
\usepackage{xcolor}
\usepackage{catchfile}
\usepackage{autobreak}
\usepackage[textsize=tiny,shadow]{todonotes}
\usepackage{changes}

\theoremstyle{plain}
\newtheorem{theorem}{Theorem}[section]
\newtheorem{corollary}[theorem]{Corollary}
\newtheorem{proposition}[theorem]{Proposition}
\newtheorem{lemma}[theorem]{Lemma}
\theoremstyle{definition}
\newtheorem{remark}[theorem]{Remark}

\newtheorem{example}[theorem]{Example}
\newtheorem{definition}[theorem]{Definition}

\newtheorem*{proposition*}{Proposition}
\newtheorem*{definition*}{Definition}
\numberwithin{equation}{section}
\theoremstyle{plain}
\newtheorem*{theorem*}{Theorem}

\newenvironment{abc}{\begin{enumerate}[{\rm (a)}]}{\end{enumerate}}

\newenvironment{iiv}{\begin{enumerate}[{\rm (i)}]}{\end{enumerate}}

\newtheorem{assumptionA}{Assumption}{\bfseries}{\upshape}

\def\dom{\mathrm{D}}
\def\dd{\mathrm{d}}

\def\Id{\mathrm{I}}
\def\ee{\mathrm{e}}
\def\RR{\mathbb{R}}
\def\NN{\mathbb{N}} 
\def\CC{\mathbb{C}}
\def\LLL{\mathscr{L}}

\def\Lip{\mathrm{Lip}}

\def\CD{\mathrm{CD}}

\newcommand{\abs}[1]{\left| #1 \right|} 
\newcommand{\norm}[1]{\|#1\|}
\newcommand{\from}{\colon}
\DeclareMathOperator{\TextRe}{Re}

\renewcommand{\Re}{\TextRe}

\setlength{\parindent}{0em} 

\makeatletter

\makeatother

\begin{document}
\title{Non-autonomous Desch-Schappacher perturbations} 

\author{Christian Budde}
\address{University of the Free State, Department of Mathematics and Applied Mathematics, P.O. Box 339, 9300 Bloemfontein, South Africa}
\email{BuddeCJ@ufs.ac.za}

\author{Christian Seifert}
\address{Christian-Albrechts-Universit\"at zu Kiel, Mathematisches Seminar, Heinrich-Hecht-Platz 6, 24118 Kiel, Germany and
Technische Universit\"at Hamburg, Institut f\"ur Mathematik, Am Schwarzenberg-Campus 3, 21073 Hamburg, Germany}
\email{christian.seifert@tuhh.de}

\begin{abstract}                                                                         
We consider time-dependent Desch--Schappacher perturbations of non-autonomous abstract Cauchy problems and apply our result to non-autonomous uniformly strongly elliptic differential operators on $\mathrm{L}^p$-spaces.
\end{abstract}

\keywords{Non-autonomous Cauchy problems, Desch--Schappacher perturbations, Evolution families, uniformly strongly elliptic differential operators}
\subjclass[2010]{37B55, 34G10, 47D06, 35B20}

\date{}
\maketitle

\section*{Introduction}

For many processes in sciences, the coefficients of the partial differential equation describing a dynamical system as well as the boundary conditions of it may vary with time. In such cases one speaks of \emph{non-autonomous} (or time-varying) evolution equations. For applications of non-autonomous dynamical systems we refer for examples to \cite{KP2013}. From an operator theoretical point of view one considers families of Banach space operators which depend on the time parameter and studies the associated non-autonomous abstract Cauchy problem. In particular, for fixed $T>0$ and a family of linear (and typically unbounded) operators $(A(t),\dom(A(t)))_{t\in[0,T]}$ on a Banach space $X$ one considers the non-autonomous abstract Cauchy problem given by
\begin{align}\label{eqn:nACP}\tag{nACP}
\begin{split}
\dot{u}(t) &= A(t)u(t),\quad T\geq t\geq s\geq0,\\
u(s) &= x\in X.
\end{split}
\end{align}
While in the autonomous case operator semigroups yield fundamental solutions and thus provide an appropriate solution concept, for the non-autonomous case one needs to make use of so-called evolution families $(U(t,s))_{t\geq s}$, which give rise to a notion of well-posedness \cite[Sect.~3.2]{NickelPhD}. Unfortunately, the existence of solutions of \eqref{eqn:nACP} is not as simple as in the autonomous case which, among others, comes from the lack of a differentiable structure of an evolution family as opposed to $C_0$-semigroups \cite[Ex.~3.5]{NickelPhD}. Nevertheless, there are several independent and technical attempts to find sufficient conditions for (unique) solutions of \eqref{eqn:nACP}, for example by Acquistapace and Terreni \cite{Acquistapace1987} or Kato and Tanabe \cite{K1953,T1979}. 

\medskip
Perturbation theory is a powerful tool in order to study existence, uniqueness and other qualitative properties and allows for a more general and abstract view on non-autonomous abstract Cauchy problems. More precisely, additionally given a family $(B(t),\dom(B(t)))_{t\in[0,T]}$ of linear operators in $X$ one studies the perturbed Cauchy problem
\begin{align*}
\dot{u}(t) & = (A(t)+B(t))u(t),\quad T\geq t\geq s\geq0,\\
u(s) & = x\in X,
\end{align*}
trying to make use of information of the unperturbed Cauchy problem \eqref{eqn:nACP}. Time-dependent perturbations of evolution equations have attracted a lot of interest in the past, see for example \cite{Evans1976,Rhandi1997}. 

\medskip
In this article, we consider non-autonomous Desch--Schappacher perturbations, i.e., we demand that the perturbation is merely bounded with respect to suitable extrapolation spaces. In our context, there are several technicalities to deal with. First of all, one has to consider extrapolation of evolution families. For this purpose, we will assume that the extrapolation spaces regarding the given family $(A(t))_{t\in[0,T]}$ of operators are uniformly equivalent. This assumption is reasonable and has been made by several authors, see for example \cite{Ama1988,JL2021}. We notice that the uniformity of the extrapolation spaces does not imply that the domains $\dom(A(t))$ are independent of time as well. The usage of extrapolation spaces first yields evolution families in this extrapolation space. In order to obtain an evolution family for the perturbed system in the original space $X$ we ask for a bit more regularity of the perturbation, namely by suitable mapping properties into the so-called Favard class. Our perturbation result is motivated by the work of Rhandi \cite{Rhandi1997} for autonomous $A$ and non-autonomous  perturbation. In order to treat the non-autonomous case, we make use of the work of Bertoni \cite{B2002,B2005,B2014}, who studied perturbations of so-called $\CD$-systems.

\medskip
It is worth to review the already existing literature regarding non-autonomous Desch--Schappacher perturbations. In \cite{HP1994}, Hinrichsen and Pritchard consider stability for time-infinite systems of closed-loop output-feedback type by means of so-called weak evolution operators and piecewise continuous operator functions. They are particularly interested in perturbed system equations of the form
\[
\dot{x}(t)=\left[A(t)+D(t)\Delta(t)E(t)\right]x(t),\quad t\geq0.
\]
They obtain non-autonomous Desch--Schappacher type perturbations by specifying $\Delta(t)=\Id$ and $E(t)=\Id$. A few years later, Schnaubelt also studied time-infinite systems of closed-loop output-feedback type in \cite{S2002} where he makes use of a Duhamel type formula in an approximative sense due to the lack of extrapolation theory for non-autonomous evolution equations. Within this framework he gets joint nonautonomous extension of the Desch--Schappacher and Miyadera perturbation theorem, cf. \cite[Rem.~4.6(a)]{S2002}. Last but not least, we want to mention the joint work of Maniar and Schnaubelt \cite{MS2008} where they follow the most general approach due to Acquistapace and Terreni \cite{Acquistapace1987} to develop a non-autonomous Desch--Schappacher perturbation theorem. In particular, the domains of the operators $(A(t),\dom(A(t)))$ are allowed to vary in time and to be non-dense and they allow H\"{o}lder continuity of the resolvent.

\medskip
In this paper, we consider a ``light'' version of a non-autonomous Desch--Schappacher perturbation theorem. In particular, we ask for reasonable conditions which are easy to check in applications. Moreover, our approach, which relies on the works of Bertoni \cite{B2002,B2005,B2014}, provides a direct and easily accessible method to treat Desch--Schappacher perturbations in the non-autonomous setting.

\medskip
Another more indirect way to tackle non-autonomous problems is given by means of so-called evolution semigroups. These have been used for non-autonomous perturbations of bounded and Miyadera--Voigt type, see \cite{RRS1996} and \cite{RSRV2000}. However, we will not consider this method in the present paper.

\medskip
Let us outline the paper.
The first two sections have a preliminary character. We start by recalling the most important facts about autonomous abstract Cauchy problems as well as their associated extrapolation and Favard spaces. In Section \ref{sec:non-autonomous}, we revise non-autonomous abstract Cauchy problems and $\CD$-systems. In Section \ref{sec:non-autonomous_DS}, we state and prove the Desch--Schappacher perturbation result in our context, where we first consider extrapolation of $\CD$-systems and then we prove our main result Theorem \ref{thm:NonAutDS}. The final section consists of an example discussing non-autonomous uniformly strongly elliptic differential operators on $\mathrm{L}^p$-spaces, i.e.\ a so-called parabolic case.

\section{Autonomous Systems and $C_0$-Semigroups}
\label{sec:autonomous}

Let $X$ be a Banach space.

\subsection{$C_0$-semigroups}
The study of semigroups of operators started in the 1940's, see e.g.\ the monograph by Hille and Phillips \cite{HP1957}, and has since then attracted a lot of attention in the literature, see for example the manuscripts by Engel and Nagel \cite{EN}, Goldstein \cite{G2017} or Pazy \cite{P1983}, just to mention a few. 

\begin{definition}\label{def:OperatorSemigroup}
A family $(T(t))_{t\geq0}$ of bounded linear operators on $X$ is called a \emph{$C_0$-semigroup} if the following properties are satisfied:
\begin{iiv}
	\item $T(0)=\Id$, and $T(t+s)=T(t)T(s)$ for all $t,s\geq0$.
	\item $\lim_{t\to0}\left\|T(t)x-x\right\|=0$ for all $x\in X$.
\end{iiv}
\end{definition}

To each operator semigroup we can assign a closed operator $(A,\dom(A))$ called its (infinitesimal) generator.

\begin{definition}\label{def:Generator}
Let $(T(t))_{t\geq0}$ be a $C_0$-semigroup on $X$. The \emph{(infinitesimal) generator} $(A,\dom(A))$ of $(T(t))_{t\geq0}$ is defined by
\begin{align}\label{eqn:Generator}
Ax:=\lim_{t\to0}\frac{T(t)x-x}{t},\quad\dom(A):=\left\{x\in X:\ \lim_{t\to0}\frac{T(t)x-x}{t}\ \text{exists}\right\}.
\end{align}
\end{definition}

We notice that $C_0$-semigroups as well as their generators enjoy good properties, cf. \cite[Chapter II, Lemma~1.3]{EN}. For example, if $(A,\dom(A))$ is the generator of a $C_0$-semigroup $(T(t))_{t\geq0}$ then by \cite[Chapter II, Thm.~1.10]{EN} the resolvent $R(\lambda,A):=(\lambda-A)^{-1}$, $\lambda\in\rho(A)$, can be expressed by means of the Laplace transform whenever $\mathrm{Re}(\lambda)$ is sufficiently large, i.e., one has 
\begin{align}\label{eqn:LaplaceTrans}
R(\lambda,A)x=\int_0^\infty{\ee^{-\lambda t}T(t)x\ \dd{t}},\quad x\in X,
\end{align}
provided $\mathrm{Re}(\lambda)>\omega_0$, where $\omega_0$ denotes the growth bound of the semigroup $(T(t))_{t\geq0}$, cf.~\cite[Chapter I, Def.~5.6]{EN}. The celebrated Hille--Yosida theorem, cf. \cite[Chapter II, Thm.~3.8]{EN}, characterises generators of $C_0$-semigroups. Moreover, $C_0$-semigroups exactly provide the solution concept for autonomous abstract Cauchy problems
\begin{align}\label{eq:ACP}
 \begin{split}
    u'(t) & = Au(t),\quad t>0,\\
    u(0) & = x,
 \end{split}
\end{align}
where $A$ is a linear operator in $X$ and $x\in X$. Then \eqref{eq:ACP} is well-posed in the sense of Hadamard if and only if $A$ is the generator of a $C_0$-semigroup $(T(t))_{t\geq 0}$, and then $u:=T(\cdot)x$ yields the solution, see e.g.\ \cite[Corollary II.6.9]{EN}.

\subsection{Extrapolation spaces}
An important tool on our way towards a non-autonomous Desch--Schappacher theorem are the so-called extrapolation spaces. They are also important in several applications, see for example boundary perturbations \cite{Gr1987,HMR2015} or maximal regularity \cite{DaPG1984,NaSi2006}, just to mention a few. Let us recall the basic construction of extrapolation spaces, cf. \cite[Chapter II, Def.~5.4]{EN} or \cite[Def.~1.4]{NagelExtra}. Notice, that if $(A,\dom(A))$ is the generator of a $C_0$-semigroup then one may assume without loss of generality that $0\in\rho(A)$, i.e., $A$ is invertible with bounded inverse. 

\begin{definition}\label{def:Extrapolation}
Let $(A,\dom(A))$ be the generator of a $C_0$-semigroup $(T(t))_{t\geq0}$ on  $X$. With $\left\|\cdot\right\|_{-1}$ we denote the norm on $X$ defined by
\begin{align}\label{eqn:ExtrapolationNorm}
\left\|x\right\|_{-1}:=\left\|A^{-1}x\right\|,\quad x\in X.
\end{align} 
The completion of $(X,\left\|\cdot\right\|_{-1})$ is called the \emph{(first) extrapolation space} and is denoted by $X_{-1}$. If we want to emphasize that the extrapolation is with respect to the semigroup $(T(t))_{t\geq0}$ with generator $(A,\dom(A))$ we write $X_{-1}(T)$ or $X_{-1}(A)$, respectively. Furthermore, we denote the continuous extension of the operators $T(t)$ to the extrapolation space $X_{-1}$ by $T_{-1}(t)$. 
\end{definition}

The following result shows that in fact the family of operators $(T_{-1}(t))_{t\geq0}$ yields again a $C_0$-semigroup but now on the extrapolated space $X_{-1}$, cf. \cite[Chapter II, Thm.~5.5]{EN} or \cite[Thm.~1.5]{NagelExtra}.

\begin{proposition}\label{prop:ExtrapolationProperties}
%With the notation from Definition \ref{def:Extrapolation}, the following assertions hold.
\begin{abc}
	\item $X_{-1}$ is a Banach space containing $X$ as dense subspace.
	\item $(T_{-1}(t))_{t\geq0}$ is a $C_0$-semigroup on $X_{-1}$.
	\item The generator $A_{-1}$ of $(T_{-1}(t))_{t\geq0}$ has domain $\dom(A_{-1})=X$ and is the unique extension of $A:\dom(A)\to X$ to an isometry from $X$ to $X_{-1}$.
\end{abc}
\end{proposition}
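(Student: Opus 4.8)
The plan is to exploit throughout the standing assumption $0\in\rho(A)$, under which $A^{-1}=R(0,A)\in\mathcal L(X)$ and, crucially, $A^{-1}$ commutes with every $T(t)$. Part (a) is then essentially formal: $X_{-1}$ is by construction the completion of a normed space, hence a Banach space, and the inclusion $x\mapsto x$ of $(X,\|\cdot\|)$ into $(X,\|\cdot\|_{-1})$ is injective (if $\|x\|_{-1}=0$ then $A^{-1}x=0$, so $x=0$) and bounded (as $\|x\|_{-1}=\|A^{-1}x\|\le\|A^{-1}\|\,\|x\|$), so $X$ embeds as a dense subspace of $X_{-1}$. For part (b), writing $\|T(t)\|\le Me^{\omega t}$, one has for $x\in X$ the estimate $\|T(t)x\|_{-1}=\|A^{-1}T(t)x\|=\|T(t)A^{-1}x\|\le Me^{\omega t}\|x\|_{-1}$, so each $T(t)$ is $\|\cdot\|_{-1}$-bounded on the dense subspace $X$ and extends uniquely to $T_{-1}(t)\in\mathcal L(X_{-1})$ with the same bound. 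The relations $T_{-1}(0)=\Id$ and $T_{-1}(t+s)=T_{-1}(t)T_{-1}(s)$ hold on $X$ and pass to $X_{-1}$ by density and continuity; strong continuity follows from the usual $\varepsilon/3$-argument, using that for $x\in X$ one has $\|T_{-1}(t)x-x\|_{-1}=\|T(t)A^{-1}x-A^{-1}x\|\to0$ as $t\downarrow0$ (because $A^{-1}x\in X$) together with the local uniform bound on $\|T_{-1}(t)\|$.

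The substance is part (c), and the key observation is that $A^{-1}$, viewed as a map from $(X,\|\cdot\|_{-1})$ into $(X,\|\cdot\|)$, is an isometry, since $\|A^{-1}x\|=\|x\|_{-1}$ by the very definition of the extrapolation norm. As $X$ is dense in $X_{-1}$ and $(X,\|\cdot\|)$ is complete, this map extends uniquely to an isometry $J\colon X_{-1}\to X$; its range is closed (an isometry on a complete space has closed range) and contains $\dom(A)=A^{-1}(X)$, which is dense in $X$, so $J$ is a surjective isometry. Now let $A_{-1}$ denote the generator of $(T_{-1}(t))$. For $x\in\dom(A)$ one has $\tfrac1h\bigl(T_{-1}(h)x-x\bigr)=\tfrac1h\bigl(T(h)x-x\bigr)\to Ax$ in $X$, hence in $X_{-1}$, so $\dom(A)\subseteq\dom(A_{-1})$ and $A_{-1}$ extends $A$. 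Moreover, given $\xi\in X_{-1}$, choosing $x_n\to\xi$ in $X_{-1}$ we get $Jx_n=A^{-1}x_n\in\dom(A)$, $A_{-1}(Jx_n)=AA^{-1}x_n=x_n\to\xi$, and $Jx_n\to J\xi$, so by closedness of $A_{-1}$ we obtain $J\xi\in\dom(A_{-1})$ with $A_{-1}J\xi=\xi$; that is, $A_{-1}J=\Id_{X_{-1}}$.

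It remains to check that $A_{-1}$ is injective, which I would again deduce from the commutation: from $A^{-1}T(t)=T(t)A^{-1}$ on $X$ one gets $JT_{-1}(t)=T(t)J$ on all of $X_{-1}$ by density. Hence if $A_{-1}\xi=0$, then $T_{-1}(t)\xi=\xi$ for all $t\ge0$, so $T(t)(J\xi)=J\xi$ for all $t\ge0$, forcing $J\xi\in\dom(A)$ with $A(J\xi)=0$, and therefore $J\xi=0$, i.e.\ $\xi=0$. Consequently $0\in\rho(A_{-1})$ with $A_{-1}^{-1}=J$, so $\dom(A_{-1})=J(X_{-1})=X$ and $A_{-1}=J^{-1}$ on $X$; since $J^{-1}$ is an isometry from $(X,\|\cdot\|)$ onto $X_{-1}$ agreeing with $A$ on the $\|\cdot\|$-dense set $\dom(A)$, it is the unique isometric extension of $A$, which completes (c). The only genuine obstacle is this last identification, and in particular the equality $\dom(A_{-1})=X$ rather than a strictly larger domain: it hinges on the injectivity of $A_{-1}$, for which the fixed-point argument above — resting, once more, on the commutation of $A^{-1}$ with the semigroup — seems the cleanest route.
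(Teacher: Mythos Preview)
The paper does not prove this proposition at all: it is stated as a known result with references to \cite[Chapter II, Thm.~5.5]{EN} and \cite[Thm.~1.5]{NagelExtra}. Your argument is correct and essentially reproduces the standard proof found in those references, with one cosmetic difference. Rather than first defining the candidate extension $\widetilde A\colon X\to X_{-1}$ as the unique continuous (isometric) extension of $A\colon(\dom(A),\|\cdot\|)\to(X,\|\cdot\|_{-1})$ and then verifying that $\widetilde A$ generates $(T_{-1}(t))_{t\geq0}$ via the similarity $\widetilde A = J^{-1}AJ$ (which is how Engel--Nagel organise the proof), you work from the generator side, letting $A_{-1}$ denote the generator and then identifying it with $J^{-1}$. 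Both routes are equivalent and rest on the same isometry $J$; your injectivity argument for $A_{-1}$ via the intertwining relation $JT_{-1}(t)=T(t)J$ is a clean way to close the loop, though one could shortcut it by noting that $\rho(A_{-1})\neq\emptyset$ (as $A_{-1}$ is a generator) and that for large $\lambda$ the resolvent $R(\lambda,A_{-1})$ is the continuous extension of $R(\lambda,A)$, whence $\dom(A_{-1})=R(\lambda,A_{-1})X_{-1}=X$ directly.
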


Notice, that we only discussed the construction of the first extrapolation space since this is sufficient for our purpose. Furthermore, it follows from Proposition \ref{prop:ExtrapolationProperties} that $\left\|T_{-1}(t)\right\|=\left\|T(t)\right\|$ for all $t\geq0$, see also \cite{NS1994}. It is worth to mention that in general one can construct also extrapolation spaces of higher order, cf. \cite[Chapter II, Sect.~5a]{EN} or \cite[Sect.~1]{NagelExtra}.

\begin{remark}
Observe, that whenever $X$ is a separable Banach space, then $X_{-1}$ is also separable. This follows directly from the construction of extrapolation spaces.
\end{remark}

\subsection{The Favard class}
An important concept in the context of Desch--Schappacher perturbations is the so-called Favard class. 

\begin{definition}\label{def:Favardspace}
Let $(T(t))_{t\geq0}$ be a $C_0$-semigroup on $X$ with generator $(A,\dom(A))$ satisfying $\left[0,\infty\right)\subseteq\rho(A)$ and $\left\|\lambda R(\lambda,A)\right\|\leq M$ for all $\lambda>0$ and some constant $M\geq 1$. The \emph{Favard space} $F_1(A)$ corresponding to $A$ and $(T(t))_{t\geq0}$ is defined by
\begin{align}\label{eqn:Favardspace}
F_1(A):=\left\{x\in X:\ \sup_{t>0}{\frac{1}{t}\left\|T(t)x-x\right\|}<\infty\right\},
\end{align}
equipped with the norm $\left\|\cdot\right\|_{F_1(A)}$ defined by
\begin{align}\label{eqn:Favardnorm}
\left\|x\right\|_{F_1(A)}:=\sup_{t>0}{\frac{1}{t}\left\|T(t)x-x\right\|},\quad x\in F_1(A).
\end{align}
%If we explicitly want to stress the dependence of the Favard space on the generator $(A,\dom(A))$, we will denote the Favard space by $F_1(A)$.
\end{definition}

By \cite[Chapter II, Thm.~5.15]{EN} the space $F_1(A)$ is a Banach space. If $X$ is reflexive then one has $F_1(A)=\dom(A)$. 

\begin{remark}\label{rem:Favardspace_interpolation}
    Let $(T(t))_{t\geq0}$ be a $C_0$-semigroup with generator $(A,\dom(A))$ satisfying $\left[0,\infty\right)\subseteq\rho(A)$ and $\left\|\lambda R(\lambda,A)\right\|\leq M$ for all $\lambda>0$ and some constant $M\geq 1$. Then
    $F_1(A) = (X,D(A))_{1,\infty,K}$ (with equivalence of norms), see \cite[Prop.~23]{B2005} or \cite[Prop.~5.7]{L2018}.
\end{remark}

By Proposition \ref{prop:ExtrapolationProperties} the extrapolated semigroup of a $C_0$-semigroup is a $C_0$-semigroup and hence we can construct the corresponding Favard space, which is then called the Favard class.

\begin{definition}\label{def:Favardclass}
Let $(T(t))_{t\geq0}$ be a $C_0$-semigroup on $X$ with generator $(A,\dom(A))$ satisfying $\left[0,\infty\right)\subseteq\rho(A)$ and $\left\|\lambda R(\lambda,A)\right\|\leq M$ for all $\lambda>0$ and some constant $M\geq 1$. The \emph{Favard class} $F_0(A)$ is defined to be the Favard space corresponding to the extrapolated semigroup $(T_{-1}(t))_{t\geq0}$ and its generator $A_{-1}$, i.e., 
\begin{align}\label{eqn:Favardclass}
F_0(A):=\left\{x\in X_{-1}:\ \sup_{t>0}{\frac{1}{t}\left\|T_{-1}(t)x-x\right\|_{-1}}<\infty\right\},
\end{align}
equipped with the norm $\left\|\cdot\right\|_{F_0(A)}$ defined by
\begin{align}\label{eqn:Favardclassnorm}
\left\|x\right\|_{F_0(A)}:=\sup_{t>0}{\frac{1}{t}\left\|T(t)x-x\right\|_{-1}},\quad x\in F_0(A).
\end{align}

%If we explicitly want to stress the dependence of the Favard space on the generator $(A,\dom(A))$, we will denote the Favard class by $F_0(A)$.
\end{definition}

Note that $F_0(A)=F_1(A_{-1})$. In view of Proposition \ref{prop:ExtrapolationProperties}(c) and Remark \ref{rem:Favardspace_interpolation}, we have $F_0(A) = (X_{-1},X)_{1,\infty,K}$.

\section{Non-autonomous abstract Cauchy problems}
\label{sec:non-autonomous}

Let $X$ be a separable Banach space.

\subsection{Evolution families}
In what follows we consider, for fixed $T>0$ and a given family of (unbounded) operators $\bigl((A(t),\dom(A(t)))\bigr)_{t\in\left[0,T\right]}$ in $X$, the following non-autonomous abstract Cauchy problem

\begin{align}\tag{nACP}
\begin{split}
u'(t) & = A(t)u(t),\quad T\geq t\geq s\geq0,\\
u(s) & = x,
\end{split}
\end{align}
where $x\in X$. For the sake of completeness we recall what we mean by a (classical) solution of \eqref{eqn:nACP}.

\begin{definition}
Let $(A(t),\dom(A(t)))$, $t\in\left[0,T\right]$, be linear operators in $X$ and take $s\in\left[0,T\right]$ and $x\in\dom(A(s))$. Then a \emph{(classical) solution} of \eqref{eqn:nACP} is a function $u:=u(\cdot;s,x)\in\mathrm{C}^1\left(\left[s,T\right],X\right)$ such that $u(t)\in\dom(A(t))$ and $u$ satisfies \eqref{eqn:nACP} for $t\geq s$. The non-autonomous abstract Cauchy problem \eqref{eqn:nACP} is called \emph{well-posed (on spaces $Y_t$)} if there are dense subspaces $Y_s\subseteq\dom(A(s))$, $s\in\left[0,T\right]$, of $X$ such that for $s\in\left[0,T\right]$ and $x\in Y_s$ there is a unique solution $t\mapsto u(t;s,x)\in Y_t$ of \eqref{eqn:nACP}. In addition, for $s_n\to s$ and $Y_{s_n}\ni x_n\to x$, we have $\widetilde{u}(t;s_n,x_n)\to\widetilde{u}(t;s,x)$ uniformly for $t$ in compact intervals in $\RR$, where we set 
\[\widetilde{u}(t;s,x):=\begin{cases} {u}(t;s,x) & T\geq t\geq s\geq0,\\ x & t<s,\\ {u}(T;s,x) & t>T.\end{cases}\]
\end{definition}

\begin{remark}
    Note that there are different notions of solutions in the literature. For example, \cite{GalleratiVeraar2017} use so-called strong solutions with weaker regularity properties.
\end{remark}

The solution concept of strongly continuous semigroups of linear operators, in the autonomous case, is now replaced by evolution families in the non-autonomous situation. 

Let $\Delta_T:=\left\{(t,s)\in\left[0,T\right]^2:\ t\geq s\right\}$.

\begin{definition}
A family of bounded linear operators $(U(t,s))_{t,s\in\left[0,T\right], t\geq s}$ on $X$ is called a \emph{(strongly continuous) evolution family} if
\begin{iiv}
	\item $U(s,s)=\Id$ and $U(t,r)=U(t,s)U(s,r)$ for all $t,s,r\in\left[0,T\right]$ with $T\geq t\geq s\geq r\geq0$, and
	\item the mapping $\Delta_T\ni(t,s)\mapsto U(t,s)$ is strongly continuous.
\end{iiv}
We say that $(U(t,s))_{t\geq s}$ \emph{solves the non-autonomous abstract Cauchy problem \eqref{eqn:nACP} (on spaces $Y_t$)} if there are dense subspaces $Y_s$, $s\in\left[0,T\right]$, of $X$ such that $U(t,s)Y_s\subseteq Y_t\subseteq\dom(A(t))$ for $T\geq t\geq s\geq0$ and the function $t\mapsto U(t,s)x$ is a solution of \eqref{eqn:nACP} for $s\in\left[0,T\right]$ and $x\in Y_s$.
\end{definition} 

That evolution families are the right concept in the non-autonomous situation is justified by the following fact.

\begin{proposition}
The non-autonomous abstract Cauchy problem \eqref{eqn:nACP} is well-posed on $Y_t$ if and only if there is an evolution family solving \eqref{eqn:nACP} on $Y_t$.
\end{proposition}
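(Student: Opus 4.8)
The plan is to prove both implications directly from the definitions. The forward implication is essentially tautological given the definition of well-posedness ``on spaces $Y_t$''. Assume \eqref{eqn:nACP} is well-posed on $Y_t$, so for each $s\in[0,T]$ and $x\in Y_s$ there is a unique solution $u(\cdot;s,x)$ with $u(t;s,x)\in Y_t$. First I would define $U(t,s)x := u(t;s,x)$ for $x\in Y_s$ and $(t,s)\in\Delta_T$. The continuous dependence built into the definition of well-posedness, together with the uniform boundedness principle, should yield that each $U(t,s)$ extends to a bounded operator on $X$ (one has to argue boundedness: pick $s$, note $x\mapsto U(t,s)x$ is defined on the dense subspace $Y_s$; boundedness follows because $\widetilde u(t;s,x_n)\to\widetilde u(t;s,x)$ for $Y_s\ni x_n\to x$ forces closability, or one invokes a Banach--Steinhaus argument over $t$ in a compact interval). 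The semigroup-type law $U(t,r)=U(t,s)U(s,r)$ on $Y_r$ follows from uniqueness of solutions: the restriction of $u(\cdot;r,x)$ to $[s,T]$ is a solution of \eqref{eqn:nACP} with initial value $u(s;r,x)\in Y_s$ at time $s$, hence coincides with $u(\cdot;s,u(s;r,x))$. Strong continuity of $(t,s)\mapsto U(t,s)$ on $\Delta_T$ is exactly the continuous-dependence clause, extended from the dense subspaces to all of $X$ using uniform boundedness on compact subsets of $\Delta_T$. Thus $(U(t,s))$ is an evolution family solving \eqref{eqn:nACP} on $Y_t$.

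For the converse, assume there is an evolution family $(U(t,s))$ solving \eqref{eqn:nACP} on $Y_t$. Then by definition $U(t,s)Y_s\subseteq Y_t\subseteq\dom(A(t))$ and $t\mapsto U(t,s)x$ is a (classical) solution for $x\in Y_s$; this gives existence of solutions. Uniqueness is the point that needs a genuine argument: if $v$ is another solution of \eqref{eqn:nACP} on $[s,T]$ with $v(t)\in Y_t$ and $v(s)=x$, I would fix $t\in[s,T]$ and consider the function $r\mapsto U(t,r)v(r)$ on $[s,t]$. Differentiating and using both $v'(r)=A(r)v(r)$ and the fact that $r\mapsto U(t,r)y$ solves the backward equation (which one extracts from the evolution-family law $U(t,r+h)U(r+h,r)=U(t,r)$ together with differentiability of $U(\cdot,r)y$ at points of $Y_r$), one gets that $r\mapsto U(t,r)v(r)$ has vanishing derivative, hence is constant, so $v(t)=U(t,t)v(t)=U(t,s)v(s)=U(t,s)x$. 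Finally the continuous-dependence condition for well-posedness is immediate from joint strong continuity of $(t,s)\mapsto U(t,s)$ and boundedness on compacta: $\widetilde u(t;s_n,x_n)=U(t\vee s_n, s_n)x_n$ (in the appropriate case distinction) converges to $\widetilde u(t;s,x)$ uniformly for $t$ in compact intervals.

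The main obstacle is the uniqueness argument in the converse direction, because it requires differentiating $r\mapsto U(t,r)v(r)$ and one must be careful that this is legitimate: one needs that $r\mapsto U(t,r)y$ is differentiable with derivative $-U(t,r)A(r)y$ for $y$ in the relevant subspace, which is not part of the bare definition of an evolution family and has to be teased out of the cocycle identity and the forward differentiability hypothesis (this is the classical ``the evolution family solves the backward problem'' lemma, cf.\ \cite[Sect.~3.2]{NickelPhD}). A clean way to sidestep the product-rule subtlety is to instead work with difference quotients: write $U(t,s)x - v(t) = \sum$ (telescoping) $U(t,r_{k+1})[U(r_{k+1},r_k)v(r_k) - v(r_{k+1})]$ along a partition and estimate, but either way the heart of the matter is controlling $U(t,\cdot)$ acting on a moving vector. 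A secondary bookkeeping point is the case distinction in $\widetilde u$ and the density/uniform-boundedness steps needed to pass from $Y_s$ to all of $X$; these are routine but must be mentioned. Since this proposition is standard (see e.g.\ \cite[Sect.~3.2]{NickelPhD}), I would keep the exposition brief and cite the reference for the details of the uniqueness computation.
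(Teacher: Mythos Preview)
The paper does not prove this proposition at all: it is stated without proof as a known fact, with implicit reference to \cite[Sect.~3.2]{NickelPhD} and \cite[Chapter VI]{EN}. Your sketch is therefore not comparable to anything in the paper; it is an honest attempt at a self-contained argument where the authors simply cite the literature.

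That said, your outline is essentially the standard one and is correct in spirit. One point worth flagging: in the uniqueness argument of the converse direction you assume you can extract the backward differentiability $\frac{\partial}{\partial r}U(t,r)y=-U(t,r)A(r)y$ from the cocycle identity and the \emph{forward} differentiability alone. This is exactly the delicate step, and in the generality of the definitions given here (no regularity beyond ``$t\mapsto U(t,s)x$ is a classical solution for $x\in Y_s$'') it does not follow automatically; one typically needs either an explicit hypothesis to that effect or a density/approximation argument. This is precisely why the result is usually quoted from \cite{NickelPhD} or \cite[Prop.~VI.9.3]{EN} rather than reproved. Your instinct to cite the reference for this detail is the right call.
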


Here and in the sequel we assume that \eqref{eqn:nACP} is well-posed in the sense of \cite[Chapter VI, Def. 9.1]{EN}.%, \cite[Def. 3.3]{NickelPhD}, \cite[Def. 2.2]{Nickel1997} and \cite[Part I, Sect. 1]{Ama1988}. 
\, Especially, there exists an evolution family $(U(t,s))_{t\geq s}$ solving \eqref{eqn:nACP}. Observe that some authors also call $(U(t,s))_{t\geq s}$ a \emph{parabolic fundamental solution}. The existence of such a fundamental solution is a priori not clear and is treated by Acquistapace and Terreni \cite{Acquistapace1987,Acquistapace1988} or Kato and Tanabe \cite{Tanabe1960,KT1962,Tanabe1961,Kato1961} and many others. 

\subsection{$\CD$-systems} 
The main ingredient for the non-autonomous Desch--Schappacher perturbation theory will be the notion of $\CD$-systems. Before introducing such systems, we recall the notion of Kato stability which will be important for our purpose.

\begin{definition}[Kato stability]\label{def:KatoStab}
Let $T>0$. A family of linear operators $\bigl((A(t),\dom(A(t)))\bigr)_{t\in\left[0,T\right]}$ of $C_0$-semigroup generators on a Banach space $X$ is called \emph{stable} if there exist $M\geq1$ and $\omega\in\RR$ such that $\left(\omega,\infty\right)\subseteq\rho(A(t))$ for each $t\in\left[0,T\right]$ and
\begin{align}\label{eqn:KatoStableResolv}
\left\|\prod_{j=1}^k{R(\lambda,A(t_j))}\right\|=\left\|R(\lambda,A(t_k)))\cdots R(\lambda,A(t_1))\right\|\leq\frac{M}{(\lambda-\omega)^k},
\end{align}
for all $\lambda>\omega$ and every partition $0\leq t_1\leq t_2\leq\ldots\leq t_k\leq T$, $k\in\NN$.
\end{definition}

\begin{remark}
It has been shown by Kato \cite{K1970} that \eqref{eqn:KatoStableResolv} is equivalent to the existence of $M\geq 1$ and $\omega\in\RR$ such that 
\begin{align}\label{eqn:KatoStableSemi}
\left\|\prod_{j=1}^k{\ee^{\tau_jA(s_j)}}\right\|=\left\|\ee^{\tau_kA(s_k)}\cdots\ee^{\tau_1A(s_1)}\right\|\leq M\ee^{\omega\sum_{j=1}^k{s_j}},
\end{align}
for all partitions $0\leq s_1\leq s_2\leq\ldots\leq s_k\leq T$ and all $\tau_k\geq0$, $k\in\NN$. Here, $(\ee^{\tau A(s)})_{\tau\geq0}$ denotes the $C_0$-semigroup generated by the operator $(A(s),\dom(A(s)))$ for fixed $s\in\left[0,T\right]$
\end{remark}

\begin{remark}\label{rem:KatoStabQuasiCont}
Note that $X_{-1}$ is separable since $X$ is separable.
As already mentioned by Pazy \cite[Sect.~5.2]{P1983}, the order of the resolvent operators in \eqref{eqn:KatoStableResolv} is important since in general the resolvent operators do not commute. There it is also noticed, that if each operator $(A(t),\dom(A(t)))$, $t\in\left[0,T\right]$ is the generator of a quasi-contractive $C_0$-semigroup with a uniform growth bound then the family of operators $\bigl((A(t),\dom(A(t)))\bigr)_{t\in\left[0,T\right]}$ is stable in the sense of Definition \ref{def:KatoStab}. In particular, any family $\bigl((A(t),\dom(A(t)))\bigr)_{t\in\left[0,T\right]}$ of generators of contractive $C_0$-semigroups is stable.
\end{remark}

Let us now come to the definition of $\CD$-systems.

\begin{definition}[$\CD$-system]\label{def:CDsystem}
Let $T>0$, $\bigl((A(t),\dom(A(t)))\bigr)_{t\in\left[0,T\right]}$ a family of generators of $C_0$-semigroups on $X$. Let $\dom$ be a Banach space. Then the triplet $((A(t))_{t\in [0,T]},X,\dom)$ is called a \emph{$\CD$-system} if the following properties are satisfied:
\begin{abc}
	\item The domain $\dom(A(t))=\dom$ is independent of $t\in\left[0,T\right]$, where $\dom$ is continuously and densely embedded in $X$.
	\item The family $\bigl((A(t),\dom)\bigr)_{t\in\left[0,T\right]}$ is stable in the sense of Definition \ref{def:KatoStab}.
	\item $A(\cdot)\in\Lip\left(\left[0,T\right],\LLL_{\mathrm{s}}(\dom,X)\right)$, i.e., $A(\cdot)$ is strongly Lipschitz continuous.
\end{abc}
\end{definition}

As a matter of fact, $\CD$-systems automatically yield solutions of the associated non-autonomous abstract Cauchy problem \eqref{eqn:nACP}, see e.g.\ \cite{K1970,K1973} or \cite[Sect.~1.2]{K1985}.

\begin{theorem}\label{thm:ExSolCD}
Let $T>0$, $((A(t))_{t\in[0,T]},X,\dom)$ a $\CD$-system. Then there exists a unique evolution family $(U(t,s))_{0\leq s\leq t\leq T}$ with the following properties:
\begin{iiv}
	\item $U(t,r)U(r,s)=U(t,s)$ and $U(s,s)=\Id$ for all $t,r,s\in\left[0,T\right]$ with $t\geq r\geq s$.
	\item $U(\cdot,\cdot)\in\mathrm{C}\left(\Delta_T,\LLL_\mathrm{s}(X)\right)\cap\mathrm{C}\left(\Delta_T,\LLL_\mathrm{s}(\dom)\right)$.
	\item $\frac{\partial}{\partial{t}}U(t,s)x=A(t)U(t,s)x$ for all $x\in\dom$ and $t,s\in [0,T]$ with $t\geq s$.
	\item $\frac{\partial}{\partial{s}}U(t,s)x=-U(t,s)A(s)x$ for all $x\in\dom$ and $t,s\in [0,T]$ with $t\geq s$.
	\item There exists $M\geq1$ and $\omega\in\RR$ such that $\left\|U(t,s)\right\|\leq M\ee^{\omega(t-s)}$ for all $t,s\in\left[0,T\right]$ with $t\geq s$.
\end{iiv}
\end{theorem}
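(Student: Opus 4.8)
The plan is to construct the evolution family by a Yosida-type approximation scheme, following Kato's classical approach to hyperbolic problems. First I would introduce, for each $n\in\NN$, the Yosida approximants $A_n(t):=A(t)R(n,A(t))\cdot n = nA(t)(n-A(t))^{-1}$ (for $n$ large enough that $n\in\rho(A(t))$ uniformly, which is available from Kato stability), which are bounded operators depending continuously on $t$; hence the approximating non-autonomous problem $\dot u=A_n(t)u$ has a well-defined evolution family $U_n(t,s)$ given by the (norm-convergent) product integral / Dyson--Phillips series. The stability estimate \eqref{eqn:KatoStableSemi}, together with the fact that the $A_n(t)$ are themselves generators of uniformly bounded semigroups satisfying a Kato-stability bound with the same $M,\omega$ (up to a harmless adjustment), gives the uniform bound $\|U_n(t,s)\|\le Me^{\omega(t-s)}$, which will ultimately yield~(v).

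Next I would show that $(U_n(t,s)x)_n$ is Cauchy in $X$ for each fixed $x\in\dom$, uniformly on $\Delta_T$. This is the heart of the argument. The standard device is to write, for $m,n$,
\[
U_n(t,s)x-U_m(t,s)x=\int_s^t \frac{\partial}{\partial r}\bigl(U_n(t,r)U_m(r,s)x\bigr)\,\dd r
=\int_s^t U_n(t,r)\bigl(A_m(r)-A_n(r)\bigr)U_m(r,s)x\,\dd r,
\]
then estimate using the uniform bounds on $U_n,U_m$ (in $\LLL(X)$ and in $\LLL(\dom)$, the latter coming from assumption~(a) and the continuous embedding $\dom\hookrightarrow X$), the strong Lipschitz continuity of $A(\cdot)$ from assumption~(c) to control $U_m(r,s)x$ in $\dom$, and the fact that $A_n(r)y\to A(r)y$ in $X$ as $n\to\infty$ for $y\in\dom$, with the convergence uniform in $r$ because of the Lipschitz dependence. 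Here one needs the a priori estimate that $U_m(r,s)$ maps $\dom$ to $\dom$ with a uniform bound; this is obtained by differentiating $U_m(t,s)A_m(s)^{-1}$-type expressions, or more cleanly by running the same product-integral construction in the space $\dom$ using that $A_n(\cdot)$ is also strongly continuous into $\LLL(\dom)$ — this is the part where assumption~(a), the time-independence of the domain, is essential. The limit $U(t,s):=\lim_n U_n(t,s)$ then exists strongly on $\dom$, and by density and the uniform bound extends to all of $X$, yielding a strongly continuous family; strong continuity on $\dom$ likewise follows, giving~(ii).

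Having the family $U(t,s)$, the algebraic property~(i) passes to the limit from the corresponding identity for $U_n$. For the differentiability statements~(iii) and~(iv), I would pass to the limit in the integrated forms
\[
U_n(t,s)x=x+\int_s^t A_n(r)U_n(r,s)x\,\dd r,\qquad
U_n(t,s)x=x-\int_s^t U_n(t,r)A_n(r)x\,\dd r,
\]
valid for $x\in\dom$: on the right-hand sides, $U_n(r,s)x\to U(r,s)x$ in $\dom$ and $A_n(r)\to A(r)$ strongly into $X$, uniformly in $r$, so the integrands converge uniformly and one obtains $U(t,s)x=x+\int_s^t A(r)U(r,s)x\,\dd r$ and its $s$-analogue; differentiating these (the integrands being continuous in $r$, using~(ii)) gives~(iii) and~(iv). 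Uniqueness of the evolution family with these properties follows from a standard argument: if $V(t,s)$ is another such family, then $r\mapsto U(t,r)V(r,s)x$ is differentiable with vanishing derivative on $\dom$, hence constant, so $V(t,s)x=U(t,s)x$ for $x\in\dom$ and then for all $x\in X$ by density.

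The main obstacle, as indicated, will be the uniform-in-$n$ bound for $U_n(t,s)$ acting on $\dom$ and the ensuing Cauchy estimate: one must carefully track how the Lipschitz constant of $A(\cdot)$ and the stability constants $M,\omega$ enter, and verify that the Yosida approximants inherit Kato stability uniformly in $n$ (a known but slightly delicate point, cf.\ Kato \cite{K1970}). Everything else is bookkeeping with the two integrated identities and the uniform bounds. Since this theorem is quoted from the literature, I would in fact state it with a reference to \cite{K1970,K1973} or \cite[Sect.~1.2]{K1985} rather than reproduce the full proof.
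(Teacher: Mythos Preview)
The paper does not prove this theorem at all; it is stated as a known result with references to \cite{K1970,K1973} and \cite[Sect.~1.2]{K1985}, which is exactly what you recommend in your closing sentence. Your sketch is a reasonable outline of one route to the result, though note that Kato's original construction in \cite{K1970} proceeds via piecewise-constant (step-function) approximation of $t\mapsto A(t)$ and products of the associated semigroups over partitions, rather than via Yosida regularization of $A(t)$ at each fixed $t$; the analytic core you describe (uniform bounds from Kato stability, the telescoping integral identity for $U_n-U_m$, $\dom$-boundedness of the approximants, and passage to the limit in the integrated equations) is nevertheless the same in spirit.
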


\begin{example}\label{exa:Bertoni}
    Let $T>0$ and $g,\mu\from [0,T]\times \RR_{\geq 0}\to \RR_{>0}$ such that $\mu$ is bounded away from zero, $g(t,\cdot)\in \mathrm{W}^{1,\infty}(\RR_{\geq0})$ and $\frac{1}{g(t,\cdot)}, \mu(t,\cdot)\in \mathrm{L}^\infty(\RR_{\geq0})$ for all $t\in[0,T]$,
    and $t\mapsto g(t,x)$, $t\mapsto \frac{\partial}{\partial x} g(t,x)$ and $t\mapsto\mu(t,x)$ are Lipschitz continuous uniformly in a.e.\ $x\in \RR_{\geq 0}$.
    Let $X:=\mathrm{L}^1\left(\RR_{\geq0}\right)$ and define $\bigl((A(t),\dom(A(t)))\bigr)_{t\in[0,T]}$ by
\begin{align}
(A(t)f)(x):=-\frac{\partial}{\partial{x}}g(t,x)f(x)-\mu(t,x)f(x),\quad t\in\left[0,T\right],
\end{align}
on the constant domain
\[
\dom:=\dom(A(t)):=\left\{f\in\mathrm{W}^{1,1}\left(\RR_{\geq0}\right):\ f(0)=0\right\}.
\]
Then \cite[Prop.~19]{B2005} shows that $((A(t))_{t\in[0,T]},X,\dom)$ is a $\CD$-system.
\end{example}

\section{Non-autonomous Desch--Schappacher perturbations}
\label{sec:non-autonomous_DS}

Let $X$ be a separable Banach space and $T>0$.

\subsection{Perturbations of $\CD$-systems}
The following result by Bertoni \cite[Prop.~6]{B2014} regarding perturbations of $\CD$-systems will be crucial for our non-autonomous Desch--Schappacher perturbation result. Note that given a $\CD$-system $((A(t))_{t\in [0,T]},X,\dom)$ we have that 
\[F_1(A(t)) = (X;\dom)_{1,\infty,K} = F_1(A(s)),\quad 0\leq s,t\leq T,\]
so the Favard spaces are constant. Let $F_1:=F_1(A(0)) = F_1(A(t))$ for all $t\in[0,T]$.

\begin{proposition}\label{prop:PertCDSyst}
Let $((A(t))_{t\in [0,T]},X,\dom)$ be a $\CD$-system. If
\[
B(\cdot)\in\mathrm{L}^{\infty}\left(\left[0,T\right],\LLL_{\mathrm{s}}(\dom,F_1)\right)\cap\Lip\left(\left[0,T\right],\LLL_{\mathrm{s}}(\dom,X)\right)
\]
then $((A(t)+B(t))_{t\in[0,T]},X,\dom)$ is again a $\CD$-system.
\end{proposition}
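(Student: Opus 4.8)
The plan is to verify the three defining properties of a $\CD$-system for the triplet $((A(t)+B(t))_{t\in[0,T]},X,\dom)$ one at a time, using the $\CD$-system hypothesis on $(A(t))_{t\in[0,T]}$. Property (a) is immediate: since $B(t)\colon\dom\to F_1\hookrightarrow X$ is bounded for a.e.\ $t$ (indeed the map into $X$ is even Lipschitz, hence defined everywhere), $A(t)+B(t)$ has domain exactly $\dom$ for every $t$, and $\dom$ is already continuously and densely embedded in $X$ by assumption on the unperturbed system. Property (c), strong Lipschitz continuity of $t\mapsto A(t)+B(t)$ as a map into $\LLL_{\mathrm s}(\dom,X)$, follows by adding the two Lipschitz continuities: $A(\cdot)$ is Lipschitz in $\LLL_{\mathrm s}(\dom,X)$ because $(A(t))$ is a $\CD$-system, and $B(\cdot)\in\Lip([0,T],\LLL_{\mathrm s}(\dom,X))$ is assumed directly; the sum of two Lipschitz maps into the same space is Lipschitz.

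The real content is property (b): Kato stability of the family $(A(t)+B(t),\dom)_{t\in[0,T]}$. Here I would invoke the stability of $(A(t))_{t\in[0,T]}$ together with the fact that $B(t)$ maps $\dom$ continuously into the Favard space $F_1=F_1(A(t))$, and that the Favard-space norm controls the relevant perturbation estimates. Concretely, the standard Desch--Schappacher mechanism (in the autonomous case, as in Rhandi \cite{Rhandi1997}) shows that a perturbation bounded from $\dom$ into $F_1$ is "small enough" relative to $A(t)$ that the resolvent product estimates of Definition~\ref{def:KatoStab} survive, possibly after enlarging $M$ and $\omega$; the key inequality is that $\|\lambda R(\lambda,A(t))\|$ acting into $F_1$ stays bounded, so that $B(t)R(\lambda,A(t))$ has norm on $X$ that is uniformly bounded (and bounded by a constant times $\|B(t)\|_{\LLL(\dom,F_1)}$ times something that does not blow up), and for $\lambda$ large this is absorbed by a Neumann-series / telescoping argument applied to the product $\prod_j R(\lambda,A(t_j)+B(t_j))$. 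Since $B(\cdot)$ is in $\mathrm L^\infty([0,T],\LLL_{\mathrm s}(\dom,F_1))$, the bound $\sup_{t}\|B(t)\|_{\LLL(\dom,F_1)}$ is finite and can be used uniformly across all partitions.

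The main obstacle I expect is precisely the uniform control of the resolvent products: one must show that the factors $R(\lambda,A(t_j)+B(t_j))$ can be compared to $R(\lambda,A(t_j))$ in a way that is both (i) uniform over the time points $t_j$ and (ii) telescopes cleanly when multiplied in the prescribed non-commuting order $R(\lambda,A(t_k)+B(t_k))\cdots R(\lambda,A(t_1)+B(t_1))$. The Favard-class boundedness of $B(t)$ is exactly what is needed to make the perturbed resolvent exist and satisfy a Hille--Yosida-type bound, but stitching the single-operator estimates into the Kato-stability estimate for arbitrary partitions requires care; I would handle it by writing $R(\lambda,A(t_j)+B(t_j)) = R(\lambda,A(t_j))(\Id - B(t_j)R(\lambda,A(t_j)))^{-1}$ for $\lambda$ beyond a fixed threshold, expanding, and grouping terms so that consecutive $A$-resolvents meet and one can apply \eqref{eqn:KatoStableResolv} to the unperturbed family, while the interspersed bounded factors $B(t_j)R(\lambda,A(t_j))$ contribute only a geometric-series constant. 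Since this is Bertoni's \cite[Prop.~6]{B2014}, I would in the write-up either reproduce this argument or, more economically, reduce the hypotheses to exactly those of Bertoni's statement and cite it, noting that the constancy of the Favard spaces $F_1(A(t))=F_1$ (already recorded before the proposition via the interpolation description $F_1(A(t))=(X;\dom)_{1,\infty,K}$) is what makes the single space $F_1$ in the hypothesis well-defined and time-independent.
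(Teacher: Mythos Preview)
The paper does not prove this proposition at all: it is introduced as ``the following result by Bertoni \cite[Prop.~6]{B2014}'' and simply cited. Your closing suggestion---to reduce to Bertoni's hypotheses and cite---is therefore exactly what the paper does, and your observation that the interpolation identity $F_1(A(t))=(X,\dom)_{1,\infty,K}$ makes $F_1$ time-independent is precisely the remark the paper places just before the statement.

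Your sketch goes further than the paper by outlining an actual argument. Properties (a) and (c) are handled correctly. For (b), however, the mechanism you describe is not quite right: writing $R(\lambda,A(t_j)+B(t_j))=R(\lambda,A(t_j))(\Id-B(t_j)R(\lambda,A(t_j)))^{-1}$ and appealing to a Neumann series requires $\|B(t_j)R(\lambda,A(t_j))\|_{\LLL(X)}<1$ for large $\lambda$, but $\|R(\lambda,A(t))\|_{X\to\dom}$ is in general only $O(1)$, not $o(1)$, so boundedness of $B(t)\colon\dom\to X$ alone does not give smallness. The genuine role of the Favard class is different: one uses that $A(t)+B(t)$ again generates a $C_0$-semigroup (this is the autonomous Desch--Schappacher theorem, where $B(t)\colon\dom\to F_1$ is the correct hypothesis) and then establishes stability via the semigroup-product characterisation \eqref{eqn:KatoStableSemi} rather than directly through resolvent products. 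If you want to write out the proof rather than cite, you should follow Bertoni's route through the semigroup side; the resolvent-side Neumann argument as you have it does not close.
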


In other words, Proposition \ref{prop:PertCDSyst} tells us that whenever we perturb a $\CD$-system with a sufficiently regular family of operators then we obtain again a $\CD$-system and hence by Theorem \ref{thm:ExSolCD} an evolution family solving the associated \eqref{eqn:nACP}.

\subsection{$\CD$-systems in extrapolation spaces}

Let $\bigl((A(t),\dom(A(t)))\bigr)_{t\in[0,T]}$ be a family of closed and densely defined operators in $X$. Under suitable assumptions we are going to show that the family of operators $((A(t)_{-1})_{t\in [0,T]},X_{-1},X)$ is $\CD$-system. To get this result, we first state and comment the on the assumptions we use.

\begin{assumptionA}\label{assump:GenAnalySemi}
There exist $\theta\in\left(\frac{\pi}{2},\pi\right)$ and $M\geq 1$ such that
\begin{iiv}
	\item $\Sigma_{\theta,0} \subseteq\rho(A(t))$ for all $t\in\left[0,T\right]$ and
	\item $\left\|R(\lambda,A(t))\right\|\leq\frac{M}{\left|\lambda\right|}$ for all $\lambda\in\Sigma_{\theta,0}\setminus\left\{0\right\}$ and $\left\|A(t)^{-1}\right\|\leq M$ for all $t\in\left[0,T\right]$,
\end{iiv}
where $\Sigma_{\theta,0}:=\left\{z\in\mathbb{C}:\ \left|\arg(z)\right|\leq\theta\right\}\cup\left\{0\right\}$ is a sector in the complex plane.
\end{assumptionA}

\begin{remark}\label{rem:GenAnalySemi}
We observe that Assumption \ref{assump:GenAnalySemi} guarantees that each operator $(A(t),\dom(A(t)))$, $t\in\left[0,T\right]$ generates a strongly continuous and analytic semigroup on the Banach space $X$. In particular, one can construct the extrapolation spaces $X_{-1}(A(t))$ and extrapolated operators $A_{-1}(t):=A(t)_{-1}$ for each $t\in\left[0,T\right]$. We notice that by construction of extrapolation spaces all operators $A_{-1}(t)$ have the same domain, namely $\dom(A_{-1}(t))=X$ for all $t\in\left[0,T\right]$. Furthermore, by \cite[Chapter V, Prop.~1.3.1]{Ama1995}, it holds that $\rho(A(t))=\rho(A_{-1}(t))$ for all $t\in\left[0,T\right]$.
\end{remark}

\begin{lemma}\label{lemma:EquivLip}
    Let $\bigl((A(t),\dom(A(t)))\bigr)_{t\in[0,T]}$ satisfy Assumption \ref{assump:GenAnalySemi} and assume that $\dom(A(t))=\dom(A(s))$ for all $s,t\in[0,T]$. 
    \begin{abc}
    \item The following are equivalent.
    \begin{iiv}
        \item There exists $C\geq0$ such that
        $\left\|\ee^{\tau A(t)}-\ee^{\tau A(s)}\right\|\leq C\left|t-s\right|$
        for all $t,s\in[0,T]$, $\tau\geq 0$.
        \item There exists $C\geq0$ such that
        $\|R(\lambda,A(t))-R(\lambda,A(s))\|\leq\frac{C}{\left|\lambda\right|}\left|t-s\right|$ for all $t,s\in[0,T]$, $\lambda\in \Sigma_{\theta,0}\setminus\{0\}$.
    \end{iiv}
    \item
        Assume there exists $C\geq0$ such that
        $\left\|\Id-A(t)A(s)^{-1}\right\|\leq C\left|t-s\right|$ for all $t,s\in[0,T]$.
        Then there exists $C\geq0$ such that
        $$\left\|R(\lambda,A(t))-R(\lambda,A(s))\right\|\leq\frac{C}{\left|\lambda\right|}\left|t-s\right|$$ for all $t,s\in[0,T]$, $\lambda\in \Sigma_{\theta,0}\setminus\{0\}$.
    \end{abc}
\end{lemma}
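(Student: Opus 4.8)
The plan is to read off all three assertions from the resolvent identity together with the Dunford and Laplace representations of bounded analytic semigroups, using that by Assumption~\ref{assump:GenAnalySemi} each $A(t)$ generates a \emph{bounded} analytic $C_0$-semigroup, with bound uniform in $t\in[0,T]$ (this follows from Assumption~\ref{assump:GenAnalySemi} since $\theta>\pi/2$; cf.\ Remark~\ref{rem:GenAnalySemi} and standard analytic semigroup theory); in particular $(0,\infty)\subseteq\rho(A(t))$ and $R(\mu,A(t))x=\int_0^\infty\ee^{-\mu\tau}\ee^{\tau A(t)}x\,\dd\tau$ for all $\mu>0$ and $x\in X$. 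I would dispose of (b) first, since it is purely algebraic: as $\dom(A(t))=\dom(A(s))=:\dom$ is constant, the operator $A(t)A(s)^{-1}$ is everywhere defined and closed, hence bounded, and $A(t)-A(s)=-\bigl(\Id-A(t)A(s)^{-1}\bigr)A(s)$ on $\dom$, so the resolvent identity gives, for $\lambda\in\Sigma_{\theta,0}\setminus\{0\}$,
\[
R(\lambda,A(t))-R(\lambda,A(s))=-R(\lambda,A(t))\,\bigl(\Id-A(t)A(s)^{-1}\bigr)\,A(s)R(\lambda,A(s)).
\]
Since $\|R(\lambda,A(t))\|\le M/\abs{\lambda}$ and $A(s)R(\lambda,A(s))=\lambda R(\lambda,A(s))-\Id$ has norm at most $M+1$ on the sector, the hypothesis $\|\Id-A(t)A(s)^{-1}\|\le C\abs{t-s}$ yields (ii) with constant $CM(M+1)$.

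For (a), the implication $(i)\Rightarrow(ii)$ I would split into two steps. First, applying the Laplace representation to $x\in X$ and using (i) gives $\|R(\mu,A(t))-R(\mu,A(s))\|\le C\abs{t-s}/\mu$ for every $\mu>0$. Second, I would transport this from the positive half-line to the whole sector via the elementary identity, valid for $\lambda,\mu\in\rho(A(t))\cap\rho(A(s))$,
\[
R(\lambda,A(t))-R(\lambda,A(s))=\bigl(\Id+(\mu-\lambda)R(\lambda,A(t))\bigr)\bigl(R(\mu,A(t))-R(\mu,A(s))\bigr)\bigl(\Id+(\mu-\lambda)R(\lambda,A(s))\bigr),
\]
which one checks by multiplying out and using $\bigl(\Id+(\mu-\lambda)R(\lambda,A)\bigr)R(\mu,A)=R(\mu,A)\bigl(\Id+(\mu-\lambda)R(\lambda,A)\bigr)=R(\lambda,A)$. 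For $\lambda\in\Sigma_{\theta,0}\setminus\{0\}$ I would take $\mu:=\abs{\lambda}>0$; then $\abs{\mu-\lambda}\le2\abs{\lambda}$, so $\|\Id+(\mu-\lambda)R(\lambda,A(\cdot))\|\le1+2M$ by Assumption~\ref{assump:GenAnalySemi}, and the identity together with the first step gives (ii) on the full sector with constant $(1+2M)^2C$.

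For $(ii)\Rightarrow(i)$ I would use the contour representation of the semigroups, the essential point being that the contour must be allowed to depend on $\tau$. Fix $\phi\in(\pi/2,\theta)$ and, for $\tau>0$, let $\Gamma_\tau$ consist of the two rays $\{r\ee^{\pm i\phi}:r\ge1/\tau\}$ together with the arc $\{\tau^{-1}\ee^{i\alpha}:\abs{\alpha}\le\phi\}$ (oriented counterclockwise); then $\Gamma_\tau\subseteq\Sigma_{\theta,0}\setminus\{0\}\subseteq\rho(A(t))$ and
\[
\ee^{\tau A(t)}-\ee^{\tau A(s)}=\frac{1}{2\pi i}\int_{\Gamma_\tau}\ee^{\tau\lambda}\bigl(R(\lambda,A(t))-R(\lambda,A(s))\bigr)\,\dd\lambda.
\]
Inserting the bound from (ii): on the arc $\abs{\ee^{\tau\lambda}}\le\ee$, the resolvent difference is at most $C\abs{t-s}\tau$, and the arc has length at most $2\pi/\tau$, so this part contributes at most $\ee\,C\abs{t-s}$; on the rays the substitution $u=\tau r$ turns $\int_{1/\tau}^\infty\ee^{-\tau r\abs{\cos\phi}}\frac{C\abs{t-s}}{r}\,\dd r$ into $C\abs{t-s}\int_1^\infty\ee^{-u\abs{\cos\phi}}\frac{\dd u}{u}$, a finite constant independent of $\tau$. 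Hence $\|\ee^{\tau A(t)}-\ee^{\tau A(s)}\|\le C'\abs{t-s}$ uniformly in $\tau>0$, and the case $\tau=0$ is trivial, giving (i).

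The hard part is exactly this last estimate near $\tau=0$: with a \emph{fixed} contour the integral $\int\ee^{\tau\lambda}\abs{\lambda}^{-1}\,\dd\lambda$ produces a spurious $\abs{\log\tau}$ (and $\abs{\log\abs{t-s}}$) factor, so the inner radius of $\Gamma_\tau$ must scale like $1/\tau$. The parallel subtlety in $(i)\Rightarrow(ii)$ is that the naive Laplace estimate only controls the resolvent difference by $C\abs{t-s}/\Re\lambda$, and rotating the Laplace contour to reach the boundary rays $\arg\lambda=\pm\theta$ of $\Sigma_{\theta,0}$ is blocked by the finite angle of analyticity — which is precisely why one goes through the algebraic resolvent identity with $\mu=\abs{\lambda}$ instead. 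Everything else (the two algebraic identities, boundedness of $A(t)A(s)^{-1}$, strong measurability for the Bochner integrals, and validity of the contour formula when $0\in\rho(A(t))$) is routine.
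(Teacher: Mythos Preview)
Your proof is correct and follows the same overall route as the paper --- Laplace representation for $(i)\Rightarrow(ii)$, the Dunford contour formula for $(ii)\Rightarrow(i)$, and the resolvent factorisation $R(\lambda,A(t))-R(\lambda,A(s))=R(\lambda,A(t))\bigl(A(t)A(s)^{-1}-\Id\bigr)A(s)R(\lambda,A(s))$ for part~(b). The paper simply delegates $(ii)\Rightarrow(i)$ to \cite[Cor.~2.6]{Sch1999} and (b) to \cite[Lemma~2.2]{AT1985}, whereas you work both out explicitly; your $\tau$-dependent contour $\Gamma_\tau$ is exactly the device behind the cited result, and your observation that a fixed contour produces a spurious $\abs{\log\tau}$ is the right diagnosis. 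The one place where you add something the paper leaves implicit is the passage from $\lambda>0$ to the full sector in $(i)\Rightarrow(ii)$: the paper just asserts this, while your algebraic identity
\[
R(\lambda,A(t))-R(\lambda,A(s))=\bigl(\Id+(\mu-\lambda)R(\lambda,A(t))\bigr)\bigl(R(\mu,A(t))-R(\mu,A(s))\bigr)\bigl(\Id+(\mu-\lambda)R(\lambda,A(s))\bigr)
\]
with $\mu=\abs{\lambda}$ gives a clean, self-contained way to do it that sidesteps the analyticity-angle issue you correctly flag.
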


\begin{proof}
\begin{abc}
\item
The implication (i)$\Rightarrow$(ii) follows directly from the fact that the resolvent can be expressed in terms of the Laplace transformation \eqref{eqn:LaplaceTrans} (at first for $\lambda>0$ which then implies the statement for all $\lambda\in \Sigma_{\theta,0}\setminus\{0\}$). The converse (ii)$\Rightarrow$(i) is \cite[Cor.~2.6]{Sch1999} and uses that
\[
\ee^{\tau A(s)}=\frac{1}{2\pi i}\int_\Gamma{\ee^{\lambda\tau}R(\lambda,A(s))\ \dd\lambda},
\]
see also \cite[Chapter II, Def.~4.2]{EN}. 
\item
This follows from \cite[Lemma~2.2]{AT1985} by observing that
\[
R(\lambda,A(t))-R(\lambda,A(s))=R(\lambda,A(t))(A(t)A(s)^{-1}-\Id)A(s)R(\lambda,A(s)). \qedhere
\]
\end{abc}
\end{proof}

\begin{assumptionA}\label{assump:ConstExtra}
Let $\bigl((A(t),\dom(A(t)))\bigr)_{t\in\RR}$ satisfy Assumption \ref{assump:GenAnalySemi} and denote by $X_{-1}(A(t))$, $t\in\left[0,T\right]$, the extrapolation space with respect to $(A(t),\dom(A(t)))$, $t\in\left[0,T\right]$. We assume that 
\begin{align*}%\label{eqn:IsomX}
X_{-1}(A(t))\cong X_{-1}(A(0))=:X_{-1},
\end{align*}
for all $t\in\left[0,T\right]$ and such that there exists $\kappa>0$ with
\begin{align}\label{eqn:eqivNorm}
\frac{1}{\kappa}\left\|x\right\|_{X_{-1}}\leq\left\|x\right\|_{X_{-1}(A(t))}\leq\kappa\left\|x\right\|_{X_{-1}},\quad x\in X_{-1},\ t\in\left[0,T\right].
\end{align}
\end{assumptionA}

\begin{remark}\label{rem:ConstExtra}
Assumption \ref{assump:ConstExtra} appears to be natural in the context of extrapolation of evolution families. The problem is that, in general, for $s,t\in\left[0,T\right]$, $s\neq t$ the corresponding extrapolation spaces $X_{-1}(A(s))$ and $X_{-1}(A(t))$ may be completely different. However, Assumption \ref{assump:ConstExtra} allows to extrapolate a given evolution family, cf. \cite[Thm.~7.2]{Ama1988}. Moreover, it allows us to transport the uniform resolvent estimate from Assumption \ref{assump:GenAnalySemi} to the extrapolation space, i.e., we have that
\[
\left\|R(\lambda,A_{-1}(t))\right\|\leq\frac{M}{\left|\lambda\right|},\quad \lambda\in\Sigma_{\theta,0}\setminus\left\{0\right\}\quad \!\text{and}\!\quad \left\|A_{-1}(t)^{-1}\right\|\leq M,\quad t\in\left[0,T\right].
\]
To see this, observe that for all $x\in X$ we have by \eqref{eqn:eqivNorm} that
\begin{align*}
& \left\|R(\lambda,A_{-1}(t))x\right\|_{X_{-1}}\leq\kappa\left\|R(\lambda,A_{-1}(t))x\right\|_{X_{-1}(A(t))}=\kappa\left\|R(\lambda,A(t))A(t)^{-1}x\right\|\\
&\leq\left\|R(\lambda,A(t))\right\|\cdot\kappa\left\|x\right\|_{X_{-1}(A(t))}\leq\left\|R(\lambda,A(t))\right\|\cdot\kappa^2\left\|x\right\|_{X_{-1}}.
\end{align*}
Thus, we observe that by combining Assumptions \ref{assump:GenAnalySemi} and \ref{assump:ConstExtra} we observe that Assumption \ref{assump:GenAnalySemi} also holds for the extrapolated family $\bigl((A_{-1}(t),X)\bigr)_{t\in [0,T]}$ (see Remark \ref{rem:GenAnalySemi} for (i) and Remark \ref{rem:ConstExtra} for (ii)). In particular, each operator $(A_{-1}(t),X)$, $t\in\left[0,T\right]$, generates a $C_0$-semigroup. 
\end{remark}

\begin{example}
It was shown by Bertoni \cite[Rem.~9]{B2005} that the $\CD$-system we mentioned in Example \ref{exa:Bertoni} has constant extrapolation spaces and that $X_{-1}=\left\{\varphi\in\mathscr{D}':\ \exists g\in\mathrm{L}^1(\RR_{\geq 0}): \varphi=g'\right\}$.
\end{example}

In view of Remark \ref{rem:GenAnalySemi} together with Remark \ref{rem:ConstExtra}, one might suspect that $((A_{-1}(t))_{t\in[0,T]},X_{-1},X)$ is a $\CD$-system. In order to show this, we need another additional assumption which is inspired by \cite[Hypo.~II]{AT1985}.

\begin{assumptionA}\label{assump:Lipschitz}
There exists $L>0$ such that $\left\|\Id-A_{-1}(t)A_{-1}(s)^{-1}\right\|\leq L\left|t-s\right|$ for all $t,s\in\left[0,T\right]$.
\end{assumptionA}

\begin{remark}
Observe that in assumption \cite[Hypo.~II]{AT1985}, Acquistapace and Terreni make use of a slightly weaker regularity assumption than Assumption \ref{assump:Lipschitz}. In fact, they assume H\"{o}lder instead of Lipschitz continuity, i.e., there exist $L>0$ and $\alpha\in\left(0,1\right)$ such that $\left\|\Id-A_{-1}(t)A_{-1}(s)^{-1}\right\|\leq L\left|t-s\right|^\alpha$ for all $t,s\in\left[0,T\right]$.
\end{remark}

We can now show that $((A_{-1}(t))_{t\in[0,T]},X_{-1},X)$ is a $\CD$-system.

\begin{theorem}\label{thm:CDsystemsExtra}
Let $\bigl((A(t),\dom(A(t)))\bigr)_{t\in[0,T]}$ satisfy Assumptions \ref{assump:GenAnalySemi}--\ref{assump:Lipschitz}. Then $((A_{-1}(t))_{t\in[0,T]},X_{-1},X)$ is a $\CD$-system.
\end{theorem}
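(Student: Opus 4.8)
To show that $((A_{-1}(t))_{t\in[0,T]},X_{-1},X)$ is a $\CD$-system, I must verify the three conditions (a)--(c) of Definition \ref{def:CDsystem} for the family $(A_{-1}(t))_{t\in[0,T]}$ on the base space $X_{-1}$ with "domain" space $X$. Condition (a) is essentially immediate: by Remark \ref{rem:GenAnalySemi} the extrapolated operators all have domain $\dom(A_{-1}(t))=X$, independent of $t$, and by construction of the extrapolation space $X$ embeds continuously and densely into $X_{-1}$ (Proposition \ref{prop:ExtrapolationProperties}(a)). So the work is in (b) and (c).

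For condition (b), stability of $((A_{-1}(t),X))_{t\in[0,T]}$ in the sense of Definition \ref{def:KatoStab}: by Remark \ref{rem:ConstExtra}, combining Assumptions \ref{assump:GenAnalySemi} and \ref{assump:ConstExtra} yields that Assumption \ref{assump:GenAnalySemi} holds for the extrapolated family, so each $A_{-1}(t)$ generates an analytic (in particular $C_0$-) semigroup with the uniform sectorial resolvent bound $\|R(\lambda,A_{-1}(t))\|\leq M/|\lambda|$ on $\Sigma_{\theta,0}\setminus\{0\}$. I would then argue that a uniform resolvent bound of this sectorial type on a family of analytic semigroup generators already implies Kato stability: the semigroups $(\ee^{\tau A_{-1}(t)})_{\tau\geq 0}$ are uniformly bounded analytic semigroups (with a common sector and common bound), and by a rescaling/shift argument together with the fact that the growth bounds are uniformly controlled, one obtains \eqref{eqn:KatoStableSemi} with a uniform $M$ and $\omega$. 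Concretely, I would invoke the observation recorded in Remark \ref{rem:KatoStabQuasiCont}: a family of generators of quasi-contractive semigroups with uniform growth bound is Kato stable; after an equivalent renorming (or passing to $A_{-1}(t)-\omega$ for suitable $\omega$) the semigroups become quasi-contractive with uniform bound, using the uniform sectorial estimate. Some care is needed because renorming must be done uniformly in $t$, but the uniform constant $M$ in Assumption \ref{assump:GenAnalySemi}(ii) (transported to the extrapolation space in Remark \ref{rem:ConstExtra}) is exactly what makes this possible.

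For condition (c), strong Lipschitz continuity $A_{-1}(\cdot)\in\Lip([0,T],\LLL_{\mathrm s}(X,X_{-1}))$: here I would use that $A_{-1}(t)\from X\to X_{-1}$ is an isometric isomorphism (Proposition \ref{prop:ExtrapolationProperties}(c), up to the equivalence of norms from Assumption \ref{assump:ConstExtra}), so for $x\in X$,
\[
\|A_{-1}(t)x-A_{-1}(s)x\|_{X_{-1}} = \|\bigl(\Id - A_{-1}(t)A_{-1}(s)^{-1}\bigr)A_{-1}(s)x\|_{X_{-1}} \leq \|\Id - A_{-1}(t)A_{-1}(s)^{-1}\|_{\LLL(X_{-1})}\,\|A_{-1}(s)x\|_{X_{-1}},
\]
and $\|A_{-1}(s)x\|_{X_{-1}}$ is, up to the constant $\kappa$, just $\|x\|_{X}$ (in the $X_{-1}(A(s))$-norm it equals $\|A(s)^{-1}A_{-1}(s)x\|_{X} = \|x\|_X$). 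Now Assumption \ref{assump:Lipschitz} gives $\|\Id - A_{-1}(t)A_{-1}(s)^{-1}\|\leq L|t-s|$, which delivers the Lipschitz estimate with constant $\kappa^2 L$ (or similar). This is in fact precisely the reason Assumption \ref{assump:Lipschitz} is phrased for the extrapolated operators; note that Lemma \ref{lemma:EquivLip}(b) could alternatively be invoked to see that this Lipschitz hypothesis also yields the resolvent-Lipschitz estimate, but for condition (c) the direct computation above is cleaner.

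The main obstacle I anticipate is condition (b): the passage from a uniform \emph{sectorial} resolvent estimate to genuine Kato stability \eqref{eqn:KatoStableResolv} (products of non-commuting resolvents) is not completely formal, and one must be careful that the renorming making the semigroups quasi-contractive is uniform in $t\in[0,T]$ — this is where the uniform constant $\kappa$ of Assumption \ref{assump:ConstExtra} and the uniform $M$ of Assumption \ref{assump:GenAnalySemi} are indispensable. Conditions (a) and (c) are, by contrast, short once the isometry property of $A_{-1}(t)$ and the norm-equivalence \eqref{eqn:eqivNorm} are in hand.
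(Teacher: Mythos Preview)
Your verification of conditions (a) and (c) is fine; in fact your direct computation for (c) from Assumption \ref{assump:Lipschitz} is cleaner than the paper's route (which passes through the resolvent Lipschitz estimate \eqref{eqn:ResolventLipschitz} and then invokes \cite[Thm.~3(b)]{B2002}).

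The genuine gap is in condition (b). A uniform sectorial resolvent bound $\|R(\lambda,A_{-1}(t))\|\leq M/|\lambda|$ with $M>1$ does \emph{not} by itself yield Kato stability. It gives $\|\ee^{\tau A_{-1}(t)}\|\leq C$ for some $C=C(M,\theta)$, but then a product of $k$ such semigroups at different times is only bounded by $C^k$, which blows up. Your proposed cure---renorm so that each semigroup becomes quasi-contractive---does not work: the standard renorming (e.g.\ $|||x|||_t:=\sup_{\tau\geq 0}\|\ee^{\tau A_{-1}(t)}x\|$) depends on $t$, and there is no reason the resulting norms are mutually equivalent with constants independent of $t$, let alone that a single norm makes all semigroups contractive. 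The uniform constant $M$ in Assumption \ref{assump:GenAnalySemi} is necessary but not sufficient for this. In short, you have not used Assumption \ref{assump:Lipschitz} at all in your argument for stability, and it cannot be dispensed with.

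The paper's argument for (b) is different and does use Assumption \ref{assump:Lipschitz}: via Lemma \ref{lemma:EquivLip}(b) one obtains the resolvent Lipschitz estimate
\[
\|R(\lambda,A_{-1}(t))-R(\lambda,A_{-1}(s))\|\leq \frac{C}{|\lambda|}\,|t-s|,
\]
equivalently (Lemma \ref{lemma:EquivLip}(a)) the semigroup Lipschitz estimate $\|\ee^{\tau A_{-1}(t)}-\ee^{\tau A_{-1}(s)}\|\leq C|t-s|$, and then invokes \cite[Prop.~4.9]{Sch1996}, which shows precisely that this Lipschitz continuity, together with the uniform semigroup bound, implies Kato stability. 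The Lipschitz continuity is what controls the accumulation of constants when passing from one time to the next in the product \eqref{eqn:KatoStableSemi}.
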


\begin{proof}
Note that $X_{-1}$ is separable since $X$ is separable. First of all, we show that $\bigl((A_{-1}(t),X)\bigr)_{t\in[0,T]}$ is stable in the sense of Definition \ref{def:KatoStab}. By Remark \ref{rem:GenAnalySemi} and Remark \ref{rem:ConstExtra}, we observe that the family of operators $\bigl((A_{-1}(t),X)\bigr)_{t\in[0,T]}$ has constant domain and satisfies Assumption \ref{assump:GenAnalySemi}. From Lemma \ref{lemma:EquivLip} we conclude that there exists $C\geq 0$ such that
\begin{align}\label{eqn:ResolventLipschitz}
\left\|R(\lambda,A_{-1}(t))-R(\lambda,A_{-1}(s))\right\|\leq\frac{C}{\left|\lambda\right|}\left|t-s\right|,
\end{align}
for all $s,t\in\left[0,T\right]$ and each $\lambda\in\Sigma_{\theta,0}\setminus\left\{0\right\}$ or equivalently that there exists $C\geq 0$ such that
\[
\left\|\ee^{\tau A_{-1}(t)}-\ee^{\tau A_{-1}(s)}\right\|\leq C\left|t-s\right|,
\]
for all $s,t\in\left[0,T\right]$ and $\tau\geq 0$, which by \cite[Prop.~4.9]{Sch1996} implies that the family of operators $\bigl((A_{-1}(t),X)\bigr)_{t\in\left[0,T\right]}$ is stable. In order to show that the family $((A_{-1}(t))_{t\in[0,T]},X_{-1},X)$ is a $\CD$-system, it remains to show that $A_{-1}(\cdot)$ is strongly Lipschitz continuous, which directly follows from the Lipschitz continuity of the resolvents, see \eqref{eqn:ResolventLipschitz}, and \cite[Thm.~3(b)]{B2002}.
\end{proof}

\begin{remark}\label{rem:ExtraEvolFam}
Theorem \ref{thm:CDsystemsExtra} yields that the Assumptions \ref{assump:GenAnalySemi}--\ref{assump:Lipschitz} imply that $((A_{-1}(t))_{t\in[0,T]},X_{-1},X)$ is a $\CD$-system and hence by Theorem \ref{thm:ExSolCD} we obtain a unique evolution family $(\widetilde{U}(t,s))_{t\geq s}$ corresponding to the family of operators $\bigl((A_{-1}(t),X)\bigr)_{t\in\left[0,T\right]}$. Moreover, by \cite[Thm.~7.2]{Ama1988}, there exists an evolution family $(U_{-1}(t,s))_{t\geq s}$ associated with $\bigl((A_{-1}(t),X)\bigr)_{t\in\left[0,T\right]}$ such that $U_{-1}(t,s)_{|X}=U(t,s)$ (therefore also called extrapolated evolution family). Thus, uniqueness implies $\widetilde{U}(t,s)=U_{-1}(t,s)$ for all $t,s\in\left[0,T\right]$ with $t\geq s$.
\end{remark}

\begin{remark}\label{rem:LipschitzAssump}
By following the proof of Theorem \ref{thm:CDsystemsExtra} it suffices that \eqref{eqn:ResolventLipschitz} holds true (which by Lemma \ref{lemma:EquivLip} follows from Assumption \ref{assump:Lipschitz}). Since also Assumption \ref{assump:ConstExtra} holds and $R(\lambda,A_{-1}(t))_{|X}=R(\lambda,A(t))$ we obtain that \eqref{eqn:ResolventLipschitz} is already satisfied if we assume that the resolvents of $(A(t))_{t\in [0,T]}$ on $X$ are commuting and Lipschitz continuous; that is, 
\[R(\lambda,A(t))R(\mu,A(s)) = R(\mu,A(s))R(\lambda,A(t)),\quad  s,t\in [0,T],\, \lambda,\mu\in \Sigma_{\theta,0},\]

and that there exists $C'\geq 0$ such that
\begin{align}\label{eqn:ResolventLipschitz1}
\left\|R(\lambda,A(t))-R(\lambda,A(s))\right\|\leq\frac{C'}{\left|\lambda\right|}\left|t-s\right|,
\end{align}
for all $s,t\in\left[0,T\right]$ and each $\lambda\in\Sigma_{\theta,0}\setminus\left\{0\right\}$.
Indeed, we then observe for $x\in X$ that
\begin{align*}
    \norm{\bigl(R(\lambda,A_{-1}(t))-R(\lambda,A_{-1}(s))\bigr)x}_{X_{-1}} 
    & \leq  \kappa \norm{A(t)^{-1}\bigl(R(\lambda,A(t))-R(\lambda,A(s))\bigr)x}_{X} \\
    & = \kappa \norm{\bigl(R(\lambda,A(t))-R(\lambda,A(s))\bigr)A(t)^{-1}x}_{X} \\
    & \leq \kappa^2\frac{C'}{\abs{\lambda}}\abs{t-s} \norm{x}_{X_{-1}}
\end{align*}
for all $s,t\in [0,T]$ and $\lambda\in \Sigma_{\theta,0}\setminus\{0\}$, which implies \eqref{eqn:ResolventLipschitz} by density of $X$ in $X_{-1}$.
Note that commuting resolvents, sometimes used as definition of commutation of unbounded operators, have also been used in other contexts \cite{DoreVenni1987, MonniauxPruess1997}.

In general, condition \eqref{eqn:ResolventLipschitz1} can be checked easier than Assumption \ref{assump:Lipschitz} due to the fact that one does not need the knowledge of the extrapolation spaces and the extrapolated operators.
\end{remark} 

\subsection{A non-autonomous Desch--Schappacher perturbation}

We are now able to state and prove our main result, a non-autonomous Desch--Schappacher perturbation type result. Let $\bigl((A(t),\dom(A(t)))\bigr)_{t\in[0,T]}$ be a family of closed and densely defined operators in $X$ satisfying Assumption \ref{assump:GenAnalySemi}. 
Due to the fact that $\dom(A_{-1}(t))=X$ for all $t\in\left[0,T\right]$ we conclude that the Favard classes $F_0(A(t))$ are also constant in time; that is, $F_0:=F_0(A(0)) = F_0(A(t))$ for all $t\in[0,T]$, since $F_0(A(t))=(X_{-1},X)_{1,\infty,K} =F_0(A(0))$, see the last sentence of Section \ref{sec:autonomous}. %see \cite[Prop.~23]{B2005} (and e.g.\ \cite{L2018} for general interpolation theory). %CS: Lunardi macht nur $\theta<1$

\begin{theorem}\label{thm:NonAutDS}
Let $\bigl((A(t),\dom(A(t)))\bigr)_{t\in\left[0,T\right]}$ satisfy Assumption \ref{assump:GenAnalySemi}--\ref{assump:Lipschitz} and assume that the corresponding \eqref{eqn:nACP} yields a solution $(U(t,s))_{t\geq s}$. If 
\[
B(\cdot)\in\mathrm{L}^{\infty}\left(\left[0,T\right],\LLL_{\mathrm{s}}(X,F_0)\right)\cap\Lip\left(\left[0,T\right],\LLL_{\mathrm{s}}(X,X_{-1})\right),
\]
then there exists an evolution family $(V(t,s))_{t\geq s}$ on $X$ satisfying the variation of constant formula
\[
{V}(t,s)x=U(t,s)x+\int_s^t{U_{-1}(t,\sigma)B(\sigma){V}(\sigma,s)x\ \dd{\sigma}},\quad t\geq s, x\in X.
\]
\end{theorem}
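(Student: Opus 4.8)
The plan is to reduce the statement to Bertoni's perturbation theorem for $\CD$-systems (Proposition \ref{prop:PertCDSyst}), applied not to $(A(t))_{t\in[0,T]}$ itself but to the \emph{extrapolated} $\CD$-system, and then to extract the variation of constants formula by differentiating a suitable product. First, by Theorem \ref{thm:CDsystemsExtra} the triple $((A_{-1}(t))_{t\in[0,T]},X_{-1},X)$ is a $\CD$-system (and $X_{-1}$ is separable), and by Remark \ref{rem:ExtraEvolFam} its associated evolution family is the extrapolated family $(U_{-1}(t,s))_{t\geq s}$, which satisfies $U_{-1}(t,s)_{|X}=U(t,s)$. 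The Favard space of this $\CD$-system is $F_1(A_{-1}(t))=F_0(A(t))=F_0$ for every $t$ (using $F_0(A)=F_1(A_{-1})$ and the constancy of the Favard classes noted before the theorem), so the hypothesis on $B$ is exactly $B(\cdot)\in\mathrm{L}^\infty([0,T],\LLL_s(X,F_1(A_{-1}(0))))\cap\Lip([0,T],\LLL_s(X,X_{-1}))$. Hence Proposition \ref{prop:PertCDSyst} applies and yields that $((A_{-1}(t)+B(t))_{t\in[0,T]},X_{-1},X)$ is again a $\CD$-system.

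By Theorem \ref{thm:ExSolCD} there is then a unique evolution family $(\widetilde V(t,s))_{t\geq s}$ on $X_{-1}$ associated with $(A_{-1}(t)+B(t))_{t\in[0,T]}$, satisfying $\widetilde V(\cdot,\cdot)\in C(\Delta_T,\LLL_s(X_{-1}))\cap C(\Delta_T,\LLL_s(X))$; in particular each $\widetilde V(t,s)$ leaves $X$ invariant. I would set $V(t,s):=\widetilde V(t,s)_{|X}$. The evolution family axioms on $X$ follow by restriction from those of $\widetilde V$ (the cocycle law and $V(s,s)=\Id$ are inherited), and the continuity $\widetilde V(\cdot,\cdot)\in C(\Delta_T,\LLL_s(X))$ together with the uniform boundedness principle show that $(V(t,s))_{t\geq s}$ is a strongly continuous (and uniformly bounded) evolution family on $X$.

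It remains to derive the variation of constants formula. Fix $x\in X$ and $0\leq s\leq t\leq T$, and consider $g\colon[s,t]\to X_{-1}$, $g(\sigma):=U_{-1}(t,\sigma)\widetilde V(\sigma,s)x$; note $\widetilde V(\sigma,s)x\in X=\dom(A_{-1}(\sigma))$ for all $\sigma$. I would differentiate $g$ by hand, writing the difference quotient $\tfrac1h(g(\sigma+h)-g(\sigma))$ as
\[
U_{-1}(t,\sigma+h)\,\frac{\widetilde V(\sigma+h,s)x-\widetilde V(\sigma,s)x}{h}+\frac{U_{-1}(t,\sigma+h)-U_{-1}(t,\sigma)}{h}\,\widetilde V(\sigma,s)x.
\]
As $h\to0$ the first summand converges in $X_{-1}$ to $U_{-1}(t,\sigma)(A_{-1}(\sigma)+B(\sigma))\widetilde V(\sigma,s)x$, using Theorem \ref{thm:ExSolCD}(iii) for the perturbed system (with $\dom=X$), the uniform bound on $\|U_{-1}(t,\cdot)\|$ from Theorem \ref{thm:ExSolCD}(v), and strong continuity of $U_{-1}(t,\cdot)$ to pass to the limit inside $U_{-1}(t,\sigma+h)$; the second summand converges to $-U_{-1}(t,\sigma)A_{-1}(\sigma)\widetilde V(\sigma,s)x$ by Theorem \ref{thm:ExSolCD}(iv) for the unperturbed extrapolated $\CD$-system applied to the fixed vector $\widetilde V(\sigma,s)x\in X$. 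Hence $g$ is differentiable with $g'(\sigma)=U_{-1}(t,\sigma)B(\sigma)\widetilde V(\sigma,s)x$, and since $\sigma\mapsto B(\sigma)\widetilde V(\sigma,s)x$ is continuous into $X_{-1}$ (strong Lipschitz continuity of $B$ into $\LLL_s(X,X_{-1})$ combined with continuity of $\sigma\mapsto\widetilde V(\sigma,s)x$ into $X$) and $U_{-1}(t,\cdot)$ is strongly continuous, $g'\in C([s,t],X_{-1})$. The fundamental theorem of calculus in $X_{-1}$ gives $g(t)-g(s)=\int_s^t g'(\sigma)\,\dd\sigma$, i.e.
\[
\widetilde V(t,s)x-U_{-1}(t,s)x=\int_s^t U_{-1}(t,\sigma)B(\sigma)\widetilde V(\sigma,s)x\,\dd\sigma,
\]
which is the asserted identity once one uses $U_{-1}(t,s)x=U(t,s)x$ and $\widetilde V=V$ on $X$.

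The main obstacle is the differentiation step: the two factors of $g$ live in mismatched regularity classes — $U_{-1}(t,\cdot)$ is differentiable only when applied to vectors of $X$ (not of $X_{-1}$), while $\sigma\mapsto\widetilde V(\sigma,s)x$ has its derivative only in $X_{-1}$ — so the product rule cannot be quoted off the shelf and must be carried out as above, carefully tracking in which space each limit is taken and exploiting the uniform operator bound of Theorem \ref{thm:ExSolCD}(v). Everything else is routine bookkeeping with the quoted results.
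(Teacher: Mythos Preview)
Your proposal is correct and follows the same route as the paper: extrapolate to the $\CD$-system $((A_{-1}(t))_{t\in[0,T]},X_{-1},X)$, apply Proposition \ref{prop:PertCDSyst} to obtain the perturbed evolution family $\widetilde V$, and restrict to $X$ using Theorem \ref{thm:ExSolCD}(ii). The only difference is that the paper simply asserts the variation of constants formula for $\widetilde V$ on $X_{-1}$ as a known fact, whereas you supply the standard product-differentiation argument explicitly (and your handling of the regularity mismatch between the two factors is correct).
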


\begin{proof}
By Theorem \ref{thm:CDsystemsExtra} we know that $((A_{-1}(t))_{t\in[0,T]},X_{-1},X)$ is a $\CD$-system, yielding the solution $(U_{-1}(t,s))_{t\geq s}$, see Remark \ref{rem:ExtraEvolFam}. Due to the assumed regularity of $B(\cdot)$ we conclude by Proposition \ref{prop:PertCDSyst} that $((A_{-1}(t)+B(t))_{t\in[0,T]},X_{-1},X)$ is also a $\CD$-system. Therefore, Theorem \ref{thm:ExSolCD} ensures the existence of a unique solution of the corresponding \eqref{eqn:nACP} which we call $(\widetilde{V}(t,s))_{t\geq s}$. This family of operators satisfies the variation of constant formula on $X_{-1}$, i.e.,
\[
\widetilde{V}(t,s)x=U_{-1}(t,s)x+\int_s^t{U_{-1}(t,\sigma)B(\sigma)\widetilde{V}(\sigma,s)x\ \dd{\sigma}},\quad t\geq s,\ x\in X_{-1}.
\]
By Theorem \ref{thm:ExSolCD} we have that $\widetilde{V}\in\mathrm{C}(\Delta_T,\LLL_\mathrm{s}(X_{-1},X_{-1}))\cap\mathrm{C}(\Delta_T,\LLL_\mathrm{s}(X,X))$. Thus, we know that the restriction $(V(t,s))_{t\geq s}$ of $(\widetilde{V}(t,s))_{t\geq s}$ to $X$ yields a continuous family which then satisfies
\[
{V}(t,s)x=U(t,s)x+\int_s^t{U_{-1}(t,\sigma)B(\sigma){V}(\sigma,s)x\ \dd{\sigma}},\quad t\geq s,\ x\in X. \qedhere
\]
\end{proof}

% \begin{remark}\label{rem:DS_extra_withoutC}
% In view of Remark \ref{rem:LipschitzAssump}, we can replace Assumption \ref{assump:Lipschitz} by \eqref{eqn:ResolventLipschitz1} in Theorem \ref{thm:NonAutDS}.
% \end{remark}

\begin{remark}
    Let us relate Theorem \ref{thm:NonAutDS} to the existing literature. 
    \begin{abc}
        \item
        In \cite[Thm.~2.3]{Rhandi1997}, non-autonomous perturbations of autonomous abstract Cauchy problems were considered, i.e.\ there exists a generator $A_0$ in $X$ such that $A(t)=A_0$ for all $t\in [0,T]$, and $B$ was only assumed to be continuous (with values in $\LLL_{\mathrm{s}}(X,F_0(A))$). We need Lipschitz continuity (with values in $\LLL_{\mathrm{s}}(X,X_1)$) due to the fact that we have non-autonomous $A(\cdot)$.
        \item
        In \cite[Cor.~3.7]{Xiao2002}, also non-autonomous perturbations of autonomous abstract Cauchy problems were considered, i.e.\ there exists a generator $A_0$ in $X$ such that $A(t)=A_0$ for all $t\in [0,T]$, where $B$ also only needs to be continuous (with values in $\LLL_{\mathrm{s}}(X)$) and bounded (with values in $\LLL_{\mathrm{s}}(\dom(A_0),F_1(A_0))$).
        We need a corresponding boundedness assumption on $B$ (with values in $\LLL_{\mathrm{s}}(X,F_0(A_0))$, which is the same assumption as in \cite{Xiao2002} considered in the extrapolation space), but Lipschitz continuity (with values in $\LLL_{\mathrm{s}}(X,X_1)$).
    \end{abc}
\end{remark}

\begin{remark}
It is worth to mention that even if \eqref{eqn:nACP} is well-posed the perturbed non-autonomous abstract Cauchy problem does not have to be well-posed in general. In fact, the evolution family $(V(t,s))_{t\geq s}$ one obtains from Theorem \ref{thm:NonAutDS} can be interpreted as mild solution of the corresponding Cauchy problem. We also refer to \cite[Chapter VI, Ex.~9.21]{EN}.
\end{remark}

In view of Remark \ref{rem:LipschitzAssump}, we can replace Assumption \ref{assump:Lipschitz} by \eqref{eqn:ResolventLipschitz1} in Theorems \ref{thm:CDsystemsExtra} and \ref{thm:NonAutDS} if the resolvents of $(A(t))_{t\in [0,T]}$ are commuting. We formulate this as a corollary.

\begin{corollary}\label{cor:DS_extra_withoutC}
    Let $\bigl((A(t),\dom(A(t)))\bigr)_{t\in\left[0,T\right]}$ satisfy Assumption \ref{assump:GenAnalySemi}--\ref{assump:ConstExtra} and let the resolvents of $(A(t))_{t\in[0,T]}$ be commuting and satisfy \eqref{eqn:ResolventLipschitz1}. Then $((A_{-1}(t))_{t\in[0,T]},X_{-1},X)$ is a $\CD$-system.  
    Moreover, if we assume that the corresponding \eqref{eqn:nACP} yields a solution $(U(t,s))_{t\geq s}$ and if
    \[
    B(\cdot)\in\mathrm{L}^{\infty}\left(\left[0,T\right],\LLL_{\mathrm{s}}(X,F_0)\right)\cap\Lip\left(\left[0,T\right],\LLL_{\mathrm{s}}(X,X_{-1})\right),
    \]
    then there exists an evolution family $(V(t,s))_{t\geq s}$ on $X$ satisfying the variation of constant formula
    \[
    V(t,s)x=U(t,s)x+\int_s^tU_{-1}(t,\sigma)B(\sigma){V}(\sigma,s)x\ \dd\sigma,\quad t\geq s, x\in X.
    \]
\end{corollary}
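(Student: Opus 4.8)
The plan is to reduce Corollary \ref{cor:DS_extra_withoutC} to Theorems \ref{thm:CDsystemsExtra} and \ref{thm:NonAutDS} by invoking Remark \ref{rem:LipschitzAssump}. The key observation recorded in that remark is that the full strength of Assumption \ref{assump:Lipschitz} is never used in the proof of Theorem \ref{thm:CDsystemsExtra}; what is actually needed is only the Lipschitz estimate \eqref{eqn:ResolventLipschitz} for the extrapolated resolvents $R(\lambda,A_{-1}(t))$, and this in turn is a consequence of the commutativity of the resolvents of $(A(t))_{t\in[0,T]}$ on $X$ together with the on-$X$ Lipschitz estimate \eqref{eqn:ResolventLipschitz1}. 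So the first step is simply to quote the computation in Remark \ref{rem:LipschitzAssump}: for $x\in X$ one has, using \eqref{eqn:eqivNorm}, the identity $R(\lambda,A_{-1}(t))_{|X} = R(\lambda,A(t))$, commutativity of $R(\lambda,A(t))$ with $A(t)^{-1} = -R(0,A(t))$ is automatic, and commutativity across different times from the hypothesis, that
\[
\norm{\bigl(R(\lambda,A_{-1}(t))-R(\lambda,A_{-1}(s))\bigr)x}_{X_{-1}} \leq \kappa^2 \frac{C'}{\abs{\lambda}}\abs{t-s}\norm{x}_{X_{-1}},
\]
and then extend by density of $X$ in $X_{-1}$ to obtain \eqref{eqn:ResolventLipschitz} for the extrapolated family with constant $C = \kappa^2 C'$.

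The second step is to run the proof of Theorem \ref{thm:CDsystemsExtra} verbatim with \eqref{eqn:ResolventLipschitz} now established directly rather than via Lemma \ref{lemma:EquivLip}. Assumptions \ref{assump:GenAnalySemi} and \ref{assump:ConstExtra} give, by Remark \ref{rem:GenAnalySemi} and Remark \ref{rem:ConstExtra}, that $\bigl((A_{-1}(t),X)\bigr)_{t\in[0,T]}$ has constant domain $X$, is densely and continuously embedded, and satisfies the uniform resolvent bound of Assumption \ref{assump:GenAnalySemi} in the extrapolation space. Via Lemma \ref{lemma:EquivLip}(a), the Lipschitz estimate \eqref{eqn:ResolventLipschitz} is equivalent to $\norm{\ee^{\tau A_{-1}(t)}-\ee^{\tau A_{-1}(s)}}\leq C\abs{t-s}$ for all $\tau\geq 0$, and by \cite[Prop.~4.9]{Sch1996} this forces stability of $\bigl((A_{-1}(t),X)\bigr)_{t\in[0,T]}$ in the sense of Definition \ref{def:KatoStab}. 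Finally, strong Lipschitz continuity of $A_{-1}(\cdot)$ follows from \eqref{eqn:ResolventLipschitz} by \cite[Thm.~3(b)]{B2002}. Hence $((A_{-1}(t))_{t\in[0,T]},X_{-1},X)$ is a $\CD$-system, which is the first assertion of the corollary.

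For the second assertion, once the $\CD$-system property is in hand the argument is identical to the proof of Theorem \ref{thm:NonAutDS}: the $\CD$-system $((A_{-1}(t))_{t\in[0,T]},X_{-1},X)$ yields the extrapolated evolution family $(U_{-1}(t,s))_{t\geq s}$ (Remark \ref{rem:ExtraEvolFam}), the assumed regularity $B(\cdot)\in\mathrm{L}^{\infty}([0,T],\LLL_{\mathrm{s}}(X,F_0))\cap\Lip([0,T],\LLL_{\mathrm{s}}(X,X_{-1}))$ together with Proposition \ref{prop:PertCDSyst} makes $((A_{-1}(t)+B(t))_{t\in[0,T]},X_{-1},X)$ a $\CD$-system, Theorem \ref{thm:ExSolCD} produces the unique solving evolution family $(\widetilde V(t,s))_{t\geq s}$ on $X_{-1}$ satisfying the variation-of-constants formula there, and restricting to $X$ using $\widetilde V\in\mathrm{C}(\Delta_T,\LLL_\mathrm{s}(X,X))$ (again from Theorem \ref{thm:ExSolCD}) gives the family $(V(t,s))_{t\geq s}$ on $X$ with the claimed formula. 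The only genuinely new input relative to Theorem \ref{thm:NonAutDS} is the verification that \eqref{eqn:ResolventLipschitz} holds under the commutativity hypothesis, so the main (and really the sole) obstacle is the density/commutativity bookkeeping in the first step; everything afterwards is a direct appeal to the already-proved theorems. I would therefore keep the proof short, essentially: "By Remark \ref{rem:LipschitzAssump}, the hypotheses imply \eqref{eqn:ResolventLipschitz}, and the proofs of Theorems \ref{thm:CDsystemsExtra} and \ref{thm:NonAutDS} only use Assumption \ref{assump:Lipschitz} through \eqref{eqn:ResolventLipschitz}; the claims follow."
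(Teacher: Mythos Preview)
Your proposal is correct and follows exactly the paper's approach: the paper does not give a separate proof but simply states (in the sentence preceding the corollary) that by Remark \ref{rem:LipschitzAssump} one can replace Assumption \ref{assump:Lipschitz} by commuting resolvents together with \eqref{eqn:ResolventLipschitz1} in Theorems \ref{thm:CDsystemsExtra} and \ref{thm:NonAutDS}. Your final one-line summary is essentially verbatim what the paper does.
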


\section{Example: Non-autonomous uniformly strongly elliptic differential operators}

Let $T>0$. Let $m\in\NN$, and for $\alpha\in\NN_0^d$ with $\abs{\alpha}\leq m$ let $a_\alpha\from[0,T]\to\CC$ be Lipschitz continuous. Let $a\from [0,T]\times \RR^d\to\CC^d$ be defined by
\[a(t,\xi):=\sum_{\abs{\alpha} \leq m} a_\alpha(t) (i\xi)^\alpha.\]
We assume that $a$ is uniformly strongly elliptic, i.e.\ for the principal part $a_m\from [0,T]\times \RR^d\to\CC^d$, $a_m(t,\xi):= \sum_{\abs{\alpha} = m} a_\alpha(t) (i\xi)^\alpha$ of $a$ there exists $c>0$ such that
\[\Re a_m(t,\xi) \geq c\abs{\xi}^m,\quad t\in[0,T], \xi\in\RR^d.\]

Additionally, let us further assume that there exists $\omega>0$ such that $\Re a(t,\xi)\geq \omega$ for all $t\in[0,T]$ and $\xi\in\RR^d$ (which we can achieve by adding a suitable constant to $a_0$).

\medskip
Let $p\in(1,\infty)$. Then $a$ gives rise to a family $\bigl((A_p(t),\dom(A_p(t)))\bigr)_{t\in[0,T]}$ of negatives of m-sectorial differential operators in $X:=\mathrm{L}^p(\RR^d)$ as
\begin{align*}
    \dom(A_p(t)) & := \mathrm{W}^{p,m}(\RR^d),\\
    A_p(t) u & := -\sum_{\abs{\alpha}\leq m} a_\alpha(t) \partial^\alpha u
\end{align*}
for $t\in[0,T]$, see, e.g.,\ \cite[Chapter 8]{Haase2006}, \cite{BombachGabelSeifertTautenhahn2021}.
Note that by uniform strong ellipticity, $\norm{\cdot}_{\mathrm{W}^{p,m}(\RR^d)}$ and the graph norm of $A_p(s)$ are equivalent (with uniform constants for $s\in[0,T]$).

\medskip
By \cite[Theorem 8.2.1]{Haase2006}, Assumption \ref{assump:GenAnalySemi} is satisfied. Moreover, it is easy to see that the family of operators $\bigl((A_p(t),\mathrm{W}^{p,m}(\RR^d))\bigr)_{t\in[0,T]}$ satisfies Assumption \ref{assump:ConstExtra} and that $X_{-1}\cong\mathrm{W}^{p,m}(\RR^d)' = \mathrm{W}^{p',-m}(\RR^d)$, which follows from a duality argument and \cite[Chapter 3, 3.14]{AdamsFournier2003}. Here, $p'\in(1,\infty)$ is the H\"older dual exponent, i.e.\ $\frac{1}{p} + \frac{1}{p'} = 1$.

\medskip
Moreover, since $A_p(s)$ and $A_p(t)$ commute on $W^{p,2m}(\RR^d)$ for all $s,t\in [0,T]$ (as the coefficients are only depending on time and not on space), we observe that the resolvents of $(A_p(t))_{t\in[0,T]}$ commute. In order to show that \eqref{eqn:ResolventLipschitz1} is satisfied, we make use of Lemma \ref{lemma:EquivLip}(b). Let $f\in \mathrm{L}^p(\RR^d)$, $u(t):=A_p(t)^{-1}f$ for $t\in[0,T]$. By the Lipschitz continuity of the coefficient functions $a_\alpha$ there exists $C\geq 0$ such that
\[\norm{(\Id - A_p(t)A_p(s)^{-1})f} = \norm{A_p(s)u(s) - A_p(t)u(s)} \leq C\abs{t-s} \sum_{\abs{\alpha}\leq m} \norm{\partial^\alpha u(s)}\]
for $t,s\in[0,T]$.
Thus, there exists $\widetilde{C}_p\geq0$ such that
\[\norm{(\Id - A_p(t)A_p(s)^{-1})f}\leq \widetilde{C}_p\abs{t-s} \norm{u(s)}_{\mathrm{W}^{p,m}(\RR^d)},\quad t,s\in[0,T].\]
Hence, the uniform bound for $\norm{A_p(s)^{-1}}$ yields existence of $C_p\geq0$ such that
\[\norm{(\Id - A_p(t)A_p(s)^{-1})f}\leq C_p\abs{t-s} \norm{f},\quad f\in \mathrm{L}^p(\RR^d).\]
By Lemma \ref{lemma:EquivLip}(b) we obtain that \eqref{eqn:ResolventLipschitz1} is satisfied. Thus, the first part of Corollary \ref{cor:DS_extra_withoutC} yields that $\bigl(((A_p)_{-1}(t))_{t\in[0,T]},\mathrm{W}^{p',-m}(\RR^d),\mathrm{L}^p(\RR^d)\bigr)$ is a $\CD$-system. Moreover, by \cite[Section 6.1]{Lunardi2013}, there exists a unique evolution family $(U_p(t,s))_{t\geq s}$ associated with $\bigl((A_p(t),\mathrm{W}^{p,m}(\RR^d))\bigr)_{t\in [0,T]}$. Furthermore, since $\mathrm{W}^{p',-m}(\RR^d)$ is reflexive (see \cite[Theorem 3.3.12]{AdamsFournier2003}), we obtain $F_0 = \dom((A_p)_{-1}(t))$ and therefore $F_0 = \mathrm{L}^p(\RR^d)$ for all $t\in [0,T]$ by \cite[Corollary II.5.21]{EN}.
Hence, the second part of Corollary \ref{cor:DS_extra_withoutC} yields that if $$B(\cdot)\in \mathrm{L}^{\infty}\bigl([0,T],\LLL_{\mathrm{s}}(\mathrm{L}^p(\RR^d),\mathrm{L}^p(\RR^d))\bigr)\cap\Lip\bigl([0,T],\LLL_{\mathrm{s}}(\mathrm{L}^p(\RR^d),\mathrm{W}^{p',-m}(\RR^d))\bigr)$$ then there exists an evolution system $(V_p(t,s))_{t\geq s}$ on $X$ giving rise to so-called mild solutions for the perturbed non-autonomous Cauchy problem
\begin{align*}
    u'(t) & = (A(t)+B(t))u(t),\quad t\in [s,T],\\
    u(s) & = u_s\in \mathrm{L}^p(\RR^d).
\end{align*}

\begin{example}
    As a particular example, for $j\in\{1,\ldots,d\}$ let $a_{2e_j}\from[0,T]\to \CC$ be Lipschitz continuous with $c:=-\max_{j\in\{1,\ldots,d\}} \max_{t\in[0,T]} \Re a_{2e_j}(t)>0$, and for $\alpha\in\NN_0^d$ with $\abs{\alpha}\leq 1$ let $a_\alpha\from [0,T]\to\CC$ be Lipschitz continuous. Let
    \[a(t,\xi):= \sum_{j=1}^d a_{2e_j}(t) i^2\xi_j^2 + \sum_{\abs{\alpha}\leq 1} a_\alpha(t) (i\xi)^\alpha, \quad t\in[0,T], \xi\in \RR^d,\]
    and hence $A_p(t) = -\sum_{j=1}^d a_{2e_j}(t)\partial_j^2 - \sum_{\abs{\alpha}\leq 1} a_\alpha(t) \partial^\alpha$ for $t\in[0,T]$.
    As above, let us assume that there exists $\omega>0$ such that $\Re a(t,\xi)\geq\omega$ for all $t\in[0,T]$, $\xi\in\RR^d$.
    Then Assumptions \ref{assump:GenAnalySemi}--\ref{assump:ConstExtra} are satisfied, the resolvents of $(A_p(t))_{t\in[0,T]}$ are commuting and \eqref{eqn:ResolventLipschitz1} are satisfied. For $t\in[0,T]$ and $f\in \mathrm{L}^p(\RR^d)$ let
    \[B(t)f:=\begin{cases} f & t=0,\\
              \frac{1}{(2t)^d} \mathbf{1}_{(-t,t)^d} \ast f & t>0.
             \end{cases}\]
    Then clearly $B(\cdot)\in \mathrm{C}\bigl([0,T],\LLL_{\mathrm{s}}(\mathrm{L}^p(\RR^d),\mathrm{L}^p(\RR^d))\bigr)$ (however, $B(\cdot)$ is not Lipschitz continuous). Moreover, a short calculation reveals that $$B(\cdot)\in \Lip\bigl([0,T],\LLL_{\mathrm{s}}(\mathrm{L}^p(\RR^d),\mathrm{W}^{p',-2}(\RR^d))\bigr).$$
    Thus, Corollary \ref{cor:DS_extra_withoutC} is applicable.
\end{example}

\section*{Acknowledgement}
C.B. thanks the University of Wuppertal for the possibility of funding a stay at the University of Salerno within the Erasmus exchange program as well as A.~Rhandi for fruitful discussions and valuable comments. Moreover, C.B. acknowledges funding by the Deutsche Forschungsgemeinschaft (DFG, German Research Foundation) - 468736785. The authors are also thankful for the fruitful comments of the anonymous referees which allowed us to improve the article.

\end{document}